%

\documentclass[reqno]{amsart}
\usepackage{graphicx, dsfont}
\usepackage{color}
%

\setlength\oddsidemargin {-30pt}\setlength\evensidemargin{-30pt}
\setlength{\textwidth}{170mm}\setlength{\textheight}{210mm}
\headheight=12.45pt
\newtheorem{theorem}{Theorem}

\newtheorem{proposition}[theorem]{Proposition}

\newtheorem{lemma}[theorem]{Lemma}

\newtheorem{remark}{Remark}

\def\qed{\hbox{${\vcenter{\vbox{		 
   \hrule height 0.4pt\hbox{\vrule width 0.4pt height 6pt
   \kern5pt\vrule width 0.4pt}\hrule height 0.4pt}}}$}}


\def\cF{\mathcal F}

\def\bC{\mathbb C}

\def\bE{\mathbb E}

\def\bN{\mathbb N}

\def\bP{\mathbb P}

\def\bR{\mathbb R}

\def\bZ{\mathbb Z}

%

%

%

\def\stirl#1#2{\left\{\begin{matrix}#1 \\ #2 \end{matrix}\right\}}


%

%

\newcommand\symbo{\frac{(-i)^N \beta \lambda^N}{N!}}
\newcommand\dtime{\pmb D}

\begin{document}

\title[Solution of fractional high order heat-type equations]{Probabilistic representation formula for the solution of fractional high order heat-type equations}

\author{Stefano Bonaccorsi, Mirko D'Ovidio  and Sonia Mazzucchi} 

\date{\today}

\address{Stefano Bonaccorsi \newline
Dipartimento di Matematica, University of Trento
    Trento, via Sommarive 14, 38123 Povo (Trento), Italy
}
\email{stefano.bonaccorsi@unitn.it}

\address{Mirko D'Ovidio \newline
Dipartimento di Scienze di Base e Applicate per l’Ingegneria, Sapienza University of Rome, via A. Scarpa 10, 00161 Rome, Italy
}
\email{mirko.dovidio@uniroma1.it}

\address{Sonia Mazzucchi \newline
Dipartimento di Matematica, University of Trento
    Trento, via Sommarive 14, 38123 Povo (Trento), Italy
}
\email{ sonia.mazzucchi@unitn.it}

\subjclass[2000]{35C15, 60G50, 60G20, 60F05}
\keywords{Partial differential equations, probabilistic representation of solutions of PDEs, stochastic processes.}

\begin{abstract}
We propose a probabilistic construction for the solution of a general class of fractional high order heat-type equations in the one-dimensional case, 
by using a sequence of random walks in the complex plane with a suitable scaling. A time change governed by a class of subordinated processes 
allows to  handle the fractional part of the derivative in space. 
We first consider evolution equations with space fractional derivatives of any order, and later we show the extension to equations with time fractional derivative (in the sense of Caputo derivative) of order $\alpha \in (0,1)$.
\end{abstract}

\maketitle

\section{Introduction}

The connection between partial differential equations and stochastic processes, or, more generally, functional integration, is an extensively developed theory{\color{black}, which covers one-dimensional, finite dimensional and infinite dimensional problems. Since this paper is mainly devoted to the one-dimensional case, we shall specialise this introduction to such problems}. 
The first and  main example is the Feynman-Kac formula \cite{Kac1,Kac2} providing the solution of the Cauchy problem for the  ``heat equation with sink''
\begin{equation}
\label{heat}
\partial_t u(t,x)=\frac{1}{2} \partial_x^2 u(t,x) +V(x) u(t,x),\qquad x\in \bR, t\in \bR^+,
\end{equation} 
in terms of a Wiener integral of the form 
\begin{align*}
u(t,x)=\bE\left[u(0,x+W(t))e^{\int_0^t V(x+W(s)) \, {\rm d}s}\right],
\end{align*}
where $W=(W(t))_{t\geq 0}$ denotes a $1$-dimensional Wiener process.\\
In fact the  connection between heat equation and Wiener process is just  a particular case of a general theory connecting Markov processes with parabolic equations associated to second order elliptic operators (see \cite{Dyn,Fre}).  
{\color{black}In the general, $d$-dimensional case, we are} given a Lipschitz  map $\sigma :\bR^d \to L(\bR^d,\bR^d)$ from $\bR^d$ to the $d\times d$ matrices, a Lipschitz vector field $b:\bR^d\to \bR^d$  and a $d-$dimensional Wiener process $W=(W(t))_{t\geq 0}$,  the solution of the Cauchy problem 
\begin{equation}\label{PDE-gen}\left\{ \begin{array}{l}
\frac{\partial}{\partial t}u(t,x)=\frac{1}{2}Tr[\sigma(x)\sigma^*(x)\nabla^2 u(t,x)]+\langle b(x), \nabla u(t,x)\rangle+V(x) u(t,x)\\
u(0,x)=u_0(x)\\
\end{array}\right. 
\end{equation}
 is related to the solution $X^x = \{X^x(t)\}_{t\geq 0}$  of the stochastic differential equation
\begin{equation}
\label{process-gen}
\left\{ 
\begin{array}{l}
dX^x(t) = b(X^x(t)) \, {\rm d}t+\sigma(X^x(t)) \, {\rm d}W(t), \\
X^x(0)=x,\qquad x\in \bR^d
\end{array}
\right. 
\end{equation}
by the probabilistic representation  formula:
\begin{equation}
\label{Fey-Kac-gen}
u(t,x)=\bE\left[  u(0, X^x(t))e^{\int_0^t V(X^x(s)) \, {\rm d}s}    \right], \quad t\geq 0, x\in \bR^d.
\end{equation}

{\color{black}One possible extension of the heat equation occours if we replace the right-hand side of \eqref{heat}}
by a spatial fractional derivative operator $\partial _x^\alpha$, namely by a Fourier operator with symbol $\psi (y)=(-iy)^\alpha$, {\color{black}which leads to} the equation 
\begin{equation}
\label{d-alpha}
\partial_t u(t,x)=-\partial _x^\alpha u(t,x), \qquad x\in \bR,\ t\in \bR^+,\ \alpha \in (0,1).
\end{equation}
{\color{black}Fractional powers of operators have been introduced in \cite{Boch49, Feller52} where the authors considered fractional powers	 of the Laplace operator. For a closed linear operator $A$, the fractional operator $(-A)^\alpha$ has been investigated by many researchers, the reader can consult the works \cite{ KraSob59, Balak60, Wata61, Kom66, HoWe72} for example.}
{\color{black}Equation \eqref{d-alpha}} is associated with a L\'evy process $\{H^\alpha(t)\}_{t\geq 0}$ called {\em $\alpha$-stable subordinator} for $\alpha \in (0,1)$
(see \cite{Ber97} and Appendix A). In particular  the Laplace transform of  $H^\alpha(t)$ has the form 
$$\bE[e^{-\lambda H^\alpha(t)}]=e^{-t\lambda^\alpha},\qquad \lambda \in \bR^+,$$
hence the solution of the Cauchy problem associated to Eq. \eqref{d-alpha} is given by
\begin{equation}\label{rap-H}u(t,x)=\bE\left[  u(0, x+H^\alpha(t))\right].\end{equation}

The generalization of these results to different types of PDEs which do not satisfy the maximum principle is in general not possible \cite{DalFo}. 
In particular a probabilistic representation of the form \eqref{Fey-Kac-gen} or \eqref{rap-H}, giving  the solution in terms of the expectation with respect to a {\em real} stochastic process with independent increments, cannot be proved in case the second order elliptic operator on the right hand side of Eq. \eqref{PDE-gen} is replaced by a differential operator of order $N>2$, obtaining a high-order heat-type equation of the form 
\begin{equation}\label{eq-N}
\partial_t u(t,x)=\frac{\beta }{N!}\partial _x^Nu(t,x), \qquad x\in \bR,\ t\in \bR^+ ,
\end{equation}
where $\beta $ is a real constant {\color{black}satisfying some conditions on the sign, while the $\frac{1}{N!}$-term is the analog of the factor $\frac12$ for the heat equation}. In fact this no-go result was stated originally by Krylov \cite{Kry} in the case where $N=4$ and  is related to the non positivity of the  solution $g\equiv g(t,x)$ of the problem
\begin{equation}
\label{problem-g}
\left\lbrace \begin{array}{l}
\displaystyle \partial_t g = \frac{1}{N!} \partial_x^N g\\
\displaystyle g(x,0)=\delta(x), \quad x \in \mathbb{R}
\end{array} \right.
\end{equation}
as well as  the rather restricting conditions for the generalization of Kolmogorov existence theorem to the limit of a projective system of either {\em signed} or {\em complex} measures (see \cite{Tho} for this result and \cite{AlMa} for a discussion of its implication in the construction of a probabilistic representation for the solution of high-order PDEs).

 The problem of a probabilistic representation for the Cauchy problem associated to Eq. \eqref{eq-N}, namely {\em a generalized Feynman-Kac formula},  is extensively studied and   different approaches have been proposed, in particular in the case $N=4$. One of the first approaches was introduced by  Krylov \cite{Kry} and continued by Hochberg \cite{Hoc}, who introduced a stochastic pseudo-process whose transition probability function, defined as the solution of \eqref{problem-g},  is not positive definite.  The generalized Feynman-Kac formula is constructed in terms of  the expectation with respect to a  signed measure on $\bR^{[0,t]}$ with infinite total variation. For this reason the integral on   $\bR^{[0,t]}$ is not defined in Lebesgue sense, but  is meant as limit of finite dimensional cylindrical approximations \cite{BeHocOr}. It is worth  mentioning the work by D. Levin and T. Lyons \cite{LevLyo} on rough paths, conjecturing that the signed measure (with infinite total variation)  associated to the pseudo-process could exist on the quotient space of equivalence classes of paths corresponding to different parametrizations of the same path.
{\color{black}
Properties of the pseudo-process $X(t)$ associated with the signed measure $\bP$, corresponding to the fundamental solution of \eqref{problem-g} via the identity $\bP_x(X(t) \in {\rm d}y) = p(t,x,{\rm d}y)$ were studied by several authors, in particular Hochberg \cite{Hoc}, Orsingher \cite{Ors19, Hoc8, Nik15}, Lachal \cite{Lac11}, Nishioka \cite{Nishioka1996, Nishioka2001}. It shall be noticed that, in the case $N=4$, paths of $X(t)$ are not continuous.
}
 \\
A different approach was proposed by Funaki \cite{Funaki1979} and continued by Burdzy \cite{Burdzy1995}, based on  the construction of a complex valued stochastic process with dependent increments, obtained  by a certain composition of two independent Brownian motions. 

Recently in \cite{BoMa15} a new approach has been proposed. Starting from the weak convergence of the scaled random walk on the real line $S_n(t):=\frac{1}{\sqrt n}\sum_{j=1}^{\lfloor nt\rfloor}\xi_j$ to the Wiener process $B(t)$ ($\xi_j$ for $j=1,...,n$ being independent identically distributed Bernoulli random variables such that $\bP(\xi_j=1)=\bP(\xi_j=-1)=1/2$), the solution of the heat equation can be written as the limit 
$u(t,x)=\lim_{n\to +\infty }\bE[u(0, x+S_n(t))]$ . This formula can be generalized to the case of  Eq.\eqref{eq-N} with $N\in \bN$ and $N>2$, by constructing a sequence of complex random walks $\{W_n^{N,\beta}(t)\}_{n\in\bN}$ as  $W_n^{N,\beta}(t):=\frac{1}{n^{1/N}}\sum_{j=1}^{\lfloor nt\rfloor}\xi_j$, where $\xi_j$ for $j=1,...,n$ is a sequence of independent identically distributed complex random variables uniformly distributed on the set of $N$-th roots of $\beta$.
In fact, if $N>2$, the particular scaling exponent $1/N$  appearing in the definition of $W_n^{N,\beta}(t)$  does not allow the weak convergence of the sequence of random variables $W_n^{N,\beta}(t)$. However,  for a suitable class of analytical initial data $u_0:\bC\to \bC$, the limit  of the expectation, namely $\lim_{n\to\infty}\bE\left[u_0(x+W^{N,\beta}_n(t))\right]$, exists  and provides a probabilistic representation for the solution of Eq. \eqref{eq-N}.
{\color{black}Properties of the random walks $W^{N,\beta}$ are further studied in \cite{BoCalMaz}, where a kind of It\^o calculus is introduced, by the construction of the It\^o integral and an It\^o formula for the  limit of these processes.}
{\color{black}
A similar approach to pseudo-processes was introduced by Lachal \cite{Lachal2013} for $N$ even: in that paper, the $\xi_j$'s take values in the discrete set $\{-N/2, -N/2+1, \dots, N/2-1, N/2\}$ with (positive or negative) real pseudo-probabilities $\bP(\xi = k) = \delta_{k=0} + (-1)^{k-1} \binom{N}{k+N/2}$. 
He proves that with the same scaling exponent as ours, his sequence of pseudo-random walks converges to the pseudo-process associated with the signed measure $\bP$ introduced before.}

{\color{black}An extension of Eq.\ \eqref{problem-g} to the case of higher order space fractional derivatives, by replacing the order $N\in\bN$ with the product $N\alpha$, with $N\in\bN$ and  $\alpha \in (0,1)$,
 has been obtained in \cite{OrTo}. 
The authors define  a sequence of pseudo random walks, converging weakly to pseudo-processes stopped at stable subordinators. 
The fundamental solution of higher order space fractional heat type equations is obtained as the limit of the (signed) laws of the pseudo random walks, which are signed measures. 
}

A related problem is the study of time fractional equations of the form 
\begin{equation}\label{t-frac}
\begin{aligned}
\dtime^\alpha_t u(t,x) = & A u(t,x) \\
u(0,x)= & f(x),
\end{aligned} 
\end{equation}
where $A = \frac{\beta}{N!}\partial _x^N$ and  the time-fractional derivative $\dtime^\alpha_t$ must be understood in the Caputo sense.
\\
The fractional diffusion equations are related to the so-called fractional and anomalous diffusions, that is,  diffusions in non-homogeneous media with random fractal structures, see for instance \cite{meerschaert-nane-xiao}.
The term fractional is due to the replacement of standard derivatives with respect to time $t$ with fractional derivatives and the corresponding equations describe delayed diffusions. However, we do not care about the geometrical structure of the medium and therefore our meaning of fractional diffusions is far from the definition of fractional diffusions introduced in \cite{BARL}.
Anomalous diffusion occurs when the mean square displacement (or time-dependent variance) is stretched by some index, in other words proportional to $t^\alpha$ for instance.
In the literature, equation \eqref{t-frac} is used as a mathematical model of a wide range of important physical
phenomena, usually named {\em sub-} or {\em super-diffusions}, for instance in microelectronics (dielectrics and semiconductors), polymers,
transport phenomena in complex systems 
and anomalous heat conduction in porous glasses  and random media (see for instance \cite{SKM, Miller1993, GioRom92, Gorenflo1997}).
\\
Fractional diffusion equations as \eqref{t-frac} where $N=2$ have been investigated by several researchers. In \cite{Koc89, Nig86, Wyss86} the authors  established  the mathematical foundations of fractional diffusions. In \cite{Wyss86, SWyss89} and later in \cite{Hil00, DOVsl}  the authors studied the solutions to the heat-type fractional diffusion equation and the corresponding representation of the solutions in terms of Fox's functions. 
The explicit representation of the solutions by means of stable densities have been studied in \cite{OB03,Orsingher2009} and, in the case $\alpha = 1/2^n$, Orsingher \cite{Orsingher2009} proved the connection of this solution with the distribution of $n$-iterated Brownian motion. 
\\
For a general operator $A$ acting in space, several results can also be listed. Nigmatullin \cite{Nig86}  gave a physical interpretation when $A$ is the generator of a Markov process. 
Zaslavsky \cite{Zasl94}  introduced the fractional kinetic equation for Hamiltonian chaos. The problem concerning  an infinitely divisible generator $A$ on a finite dimensional space has been investigated in \cite{BM01}. 
In general, a large  class of fractional diffusion equations are solved by time-changed stochastic processes. We usually refer to such processes as stochastic solutions to the driving equations. Stochastic solutions to fractional diffusion equations can be realized through time change by inverse stable subordinators, see for example,  \cite{BMN09ann, Orsingher2009}. 
Indeed, for a guiding process $X(t)$ with generator $A$ we have that $X(L^\alpha(t))$ is governed by $\partial^\alpha_t u= A u $ where the process $L^\alpha(t)$, $t>0$ is an inverse or hitting time process to a $\alpha$-stable subordinator. 
The time-fractional derivative comes from the fact that $X(L^\alpha(t))$ can be viewed as a scaling limit of continuous time random walks where the iid jumps are separated by iid power law waiting times (see \cite{MSheff04, Meerschaert2013}). 
The interested reader can find  a short survey on these results in \cite{NANERW}. Results on subordination principles for fractional evolution equations can be found in \cite{Boch49, Bazh00}.


Beside the interest in studying fractional equations, many researchers have concentrated their efforts toward the study of the higher-order counterpart \eqref{eq:intro-m1}
of fractional equations, see for example \cite{allouba1, BMN09, DeBla, dovSPL, KeyaLiza12, Nane2008}.  
When the underling operator generates  a strongly continuous semigroup, the time-changed process can be considered in  order to study the fractional diffusion equations and also, the higher-order equation with a non homogeneous term involving higher-order powers of the driving operator. The reader can consult Keyantuo and Lizama \cite{KeyaLiza12} and the references therein.

\vskip 1\baselineskip

The first aim of the present work is 
the generalization of the construction in \cite{BoMa15} to the case of higher order fractional derivatives of order $N\alpha$,  with $N\in\bN$ and  $\alpha \in (0,1)$. 
After a couple of sections where we introduce some preliminary results, mainly taken from \cite{BoMa15} and 
\cite{BoCalMaz}, in Section \ref{sez4}
we provide a probabilistic representation of the solution to a family of equations of the form 
\begin{equation}\label{eq-A}\partial_t u=-Au, \end{equation}
where $A:D(A)\subset L^2(\bR)\to L^2(\bR)$ is a Fourier integral operator with symbol $\Psi:\bR\to\bC$ either  of the form $\Psi (y)= c(iy)^{N\alpha}$ or $\Psi(y)=c'|y|^{N\alpha}$, with $c,c'\in\bC$ suitable constants. 

As opposite to \cite{OrTo}, in our approach  the solution of the  equation is given in terms of the limit of expectations with respect to the probability laws of rather simple jump processes in the complex plane, without the need to introduce {signed probabilities}. 

By subordinating the sequence of complex random walks $W_n^{N,\beta}(t)$  associated to the $N$-th order equation \eqref{eq-N} with a sequence of processes $\{S^\alpha_m(t)\}_{m\in\bN}$ converging weakly as $m\to+\infty$ to the $\alpha$-stable process $H^\alpha(t)$, a sequence $\{X_{n,m}(t)\}_{(n,m)\in \bN^2}$ of jump processes on the complex plane is defined as $X_{n,m}(t):=W_n^{N,\beta}(S_m^\alpha(t))$. It converges formally to an $N\alpha$-stable process in the sense that
$$\lim_{m\to\infty}\lim_{n\to\infty}\bE\left[e^{iyX_{n,m}(t)}\right] =e^{-t\left((-1)^{N+1}i^N \frac{\beta}{N!}y^N\right)^\alpha}, \qquad y\in \bR.$$
This result allows the representation of the solution  to \eqref{eq-A} with $\widehat{Au}(y):=\left(\frac{(-1)^{N+1}i^N\beta y^N}{N!}\right)^\alpha\hat u(y)$ (  $\hat \,$ denoting the  Fourier transform) as the limit
\begin{equation}
\lim_{m\to\infty}\lim_{n\to \infty}\bE\left[u_0\left(x+ W_n^{N,\beta}(S_m^\alpha(t)) \right)\right] ,\qquad t\in\bR^+, x\in\bR,
\end{equation}
for a suitable class of analytical initial data $u_0$. Moreover we show that in the case the symbol of the operator $A$ has the form $\Psi(y)=|y|^{N\alpha}$,  the solution of \eqref{eq-A} can still be given by a formula
$$\lim_{m\to\infty}\lim_{n\to \infty}\bE\left[u_0\left(x+X_{n,m}(t)+\tilde X_{n,m}(t)\right)\right] ,\qquad t\in\bR^+, x\in\bR,$$
where $X_{n,m}(t)$ and $\tilde X_{n,m}(t)$ are two independent copies of the process $X_{n,m}(t)=W_n^{N,\beta}(S^\alpha_m(t))$ and $\tilde X_{n,m}(t)=W_n^{N,\beta'}(S^\alpha_m(t))$, constructed respectively by setting $\beta =N!$ and $\beta'=-N!$.


In Section \ref{sez6} we also consider time fractional equations of the form 
\eqref{t-frac}
and we
prove that the solution of the initial value problem, for a suitable class of initial data $f$, is given by
\begin{equation}
\label{sol-t-frac}
u(t,x)=\lim_{n\to+\infty}\bE\left[f(x+W_n^{N,\beta}(L^\alpha(t)))\right],
\end{equation}
where $L^\alpha(t)$ is the inverse of the subordinator $H^\alpha(t)$.
\\
Furthermore, 
in the case where $\alpha = M^{-1}$ for some $M \in \bN$, $M > 1$, 
we prove that problem \eqref{t-frac} is equivalent to the diffusion equation with non-local forcing term
of the form
\begin{equation}
\label{eq:intro-m1}
\begin{aligned}
\partial_t u(t,x) &= A^{1/\alpha} u(t,x) + \sum_{k=1}^{1/\alpha-1} \frac{1}{\Gamma(\alpha k)} t^{\alpha k -1} A^k f(x)
\\
u(0,x) &= f(x)
\end{aligned}
\end{equation}
in the sense that both problems share the same solution \eqref{sol-t-frac}.

\section{A sequence of random walks on the complex plane}\label{sez-2}
The present  section is devoted to the construction of a sequence of random walks in the complex plane whose limit can be interpreted in a very weak sense (see theorem \ref{teo3}) as an $N-$ stable stochastic process, with $N\in \bN$.\\
Let $(\Omega, \cF, \bP)$ be a probability space. Let $\beta$ be a complex number and $N \geq 2$ a given integer. 
\\
Let $R(N)= \{e^{2 i \pi k/N},\ k=0,1,\dots, N-1\}$ denote  the set of the $N-$ roots of the unity and let us consider 
 the complex random variable $\xi_{N,\beta}$ that has uniform distribution on the set $\beta^{1/N} R(N)$, namely for any function $f:\bC\to \bC$:
\begin{equation}
\label{e2}
\bE[f(\xi_{N,\beta})] = \frac{1}{N} \, \sum_{k=0}^{N-1} f(\beta^{1/N} e^{2 i \pi k/ N}).
\end{equation}

The random variable $\xi_{N,\beta}$ has some interesting properties (see \cite{BoMa15} for detailed calculations). Indeed it admits (complex) moments of any order:
\begin{equation}
\label{e3}
\bE[\xi_{N,\beta}^m] =
\begin{cases}
\beta^{m/N}, & m = n N,\ n \in \bN,
\\
0, & \text{otherwise}
\end{cases}
\end{equation}
In particular, its characteristic function is
\begin{align*}
\psi_{\xi_{N,\beta}}(\lambda) = \frac{1}{N} \, \sum_{k=0}^{N-1} \exp(i\beta^{1/N} \lambda e^{2 i \pi k/ N}).
\end{align*}
Further, we may compute the absolute moments of $\xi_{N,\beta}$ obtaining
\begin{align*}
\bE[|\xi_{N,\beta}|^m] = |\beta|^{m/N}.
\end{align*}

Equation \eqref{e3} is the starting point for the construction of a particular sequence of random variables on the complex plane which converges (in a sense that will be explained soon) to a stable random variable of order $N\geq 2$. \\ 
Let $\{\xi_j,\ j \in \bN\}$ be a sequence of i.i.d.\ random variables having uniform distribution on the set $\beta^{1/N} R(N)$ as in \eqref{e2}.
Let $S(N,\beta)_n$ be the random walk defined by the $\{\xi_j\}$, i.e.,
\begin{align*}
S(N,\beta)_n = \sum_{j=1}^n \xi_j.
\end{align*}

Interesting properties of the complex random walk $S(N,\beta)_n$ in the case $\beta=1$ have been investigated in \cite{BoMa15}. \\
In the case where $N=3$ and  the walk $S(3,1)_n$ occurs on the
regular lattice generated by the vectors $\{(1,0),\
(-\frac12,\frac{\sqrt{3}}{2}),\ (-\frac12,-\frac{\sqrt{3}}{2})\}$,
considered as a {\em directed} graph.
Therefore, the motion is $3$-periodic, and a return to the origin only
happens if the same number of steps is made in every direction.
Therefore, we compute
\begin{align*}
\bP(S(3,1)_{3m}=0) = \frac{(3m)!}{(m!)^3} \frac{1}{3^{3m}}
\end{align*}
and Stirling's formula implies
$\bP(S(3,1)_{3m}=0) \sim \frac{1}{2 \, \pi \, m}$;
hence the expected number of returns to the origin is
\begin{align*}
\sum_{m=1}^\infty \bP(S(3,1)_{3m}=0) \sim \sum_{m=1}^\infty \frac1m = +\infty
\end{align*}
and the process is recurrent.\\
The case $N=4$ corresponds to the standard, two-dimensional random walk,
hence the motion is $2$-periodic  on the lattice
$\bZ^2$ (this time considered as an {\em undirected} graph). Finally, the
motion is recurrent.\\
In the case where $N=5$ the process is transient. Indeed, in this case,
the motion is again $5$-periodic and the only way to return to the origin
is taking the same number of steps in each direction.
Hence, again by an application of Stirling's formula,
\begin{align*}
\bP(S(5,1)_{5m}=0) = \frac{(5m)!}{(m!)^5} \frac{1}{5^{5m}} \sim
\frac{\sqrt{5}}{(2 \, \pi \, m)^2}
\end{align*}
and the expected number of returns is finite:
\begin{align*}
\sum_{m=1}^\infty \bP(S(5,1)_{5m}=0)  \sim \sum_{m=1}^\infty \frac{\sqrt{5}}{(2
\, \pi \, m)^2} < \infty
\end{align*}
However for $N>5$ the following result holds \cite{BoMa15}.
\begin{proposition}
Let $N\ge 5$. The process $\{S(N,1)_n\}$ is neighborhood-recurrent, i.e., for
every $x$ in the lattice generated by the basis $\{\beta^{1/N} e^{2 \pi i
k/N},\ k=0,1,\dots,N-1\}$ it holds
\begin{align*}
\bP(|S(N,1)_n - x| \le \varepsilon \quad \text{infinitely often}) = 1.
\end{align*}
\end{proposition}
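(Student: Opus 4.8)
The plan is to recognize $S(N,1)_n=\sum_{j=1}^n\xi_j$ as a genuinely two-dimensional, centered, finite-variance random walk on $\bC\cong\bR^2$ and to apply the classical recurrence theory for such walks, taking care to distinguish \emph{neighborhood} recurrence (what is claimed) from \emph{point} recurrence (which, as the computation for $N=5$ above already shows, fails).

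First I would record the first two moments of the increment $\xi=\xi_{N,1}$. By \eqref{e3}, for $N\ge 5$ neither $1$ nor $2$ is a positive multiple of $N$, so $\bE[\xi]=0$ and $\bE[\xi^2]=0$, while $\bE[|\xi|^2]=1$. Writing $\xi=X+iY$, the identities $\bE[\xi^2]=\bE[X^2]-\bE[Y^2]+2i\,\bE[XY]=0$ and $\bE[|\xi|^2]=\bE[X^2]+\bE[Y^2]=1$ force $\bE[X^2]=\bE[Y^2]=\half$ and $\bE[XY]=0$, together with $\bE[X]=\bE[Y]=0$. Hence $\xi$ has mean zero and covariance matrix $\half I$; in particular it is non-degenerate, i.e.\ not supported on a line, so the walk is truly planar.

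I would then invoke the Chung--Fuchs theorem in dimension two. By the central limit theorem $S(N,1)_n/\sqrt n$ converges in law to the non-degenerate Gaussian $\mathcal N(0,\half I)$, and a centered planar walk whose normalized partial sums have a non-degenerate Gaussian limit is recurrent; thus the origin is a recurrent value, $\bP(|S(N,1)_n|\le\varepsilon\ \text{infinitely often})=1$ for every $\varepsilon>0$. One can also verify the characteristic-function criterion directly: with $\phi(t)=\bE[e^{i\langle t,\xi\rangle}]$ for $t\in\bR^2$, the covariance above gives $\phi(t)=1-\frac14|t|^2+o(|t|^2)$ near $0$, so $\mathrm{Re}\,(1-\phi(t))^{-1}\sim 4|t|^{-2}$ and $\int_{|t|<\delta}\mathrm{Re}\,(1-\phi(t))^{-1}\,dt$ diverges, which is exactly the recurrence condition. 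To reach every lattice point I would use the structure theorem for recurrent random walks: the set of recurrent values is either empty or equals the closed subgroup $\bG\subseteq\bC$ generated by the support of the step law. Since the origin is recurrent this set is non-empty, hence equals $\bG$, and $\bG$ contains the group generated by $R(N)$, namely the lattice $\Lambda$ spanned by $\{e^{2\pi i k/N}:k=0,\dots,N-1\}$. Therefore every $x\in\Lambda$ is a recurrent value, which is the assertion.

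The main obstacle, and the reason the statement is phrased through neighborhoods rather than exact returns, lies precisely in this last step. For $N=5$ and for most $N\ge 7$ the lattice $\Lambda$ is dense in $\bC$ and the walk essentially never returns \emph{exactly} to $0$---consistent with the finite expected number of exact returns found above for $N=5$---so only the topological notion of recurrence can survive. The decisive input is therefore the planar Chung--Fuchs estimate, which delivers neighborhood recurrence without assuming the walk lives on a discrete lattice; once it is in place, the moment computation and the group-theoretic bookkeeping are routine.
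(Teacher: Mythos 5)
Your proposal is correct, and a preliminary remark is in order: the paper itself gives no proof of this proposition — the statement is imported verbatim from \cite{BoMa15} — so there is no internal argument to compare against line by line; your proof has to stand on its own, and it does. The three ingredients are correctly assembled. First, the moment computation is right: for $N\ge 5$ neither $1$ nor $2$ is a positive multiple of $N$, so \eqref{e3} gives $\bE[\xi]=\bE[\xi^2]=0$ and $\bE[|\xi|^2]=1$, forcing covariance $\frac12 I$, which is non-degenerate. Second, the Chung--Fuchs theorem in dimension two (in its CLT form: a planar walk whose normalized sums converge to a non-degenerate Gaussian is recurrent) applies to walks on $\bR^2\cong\bC$ with no lattice or aperiodicity hypothesis, so the origin is a recurrent value. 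Third, the structure theorem — the set of recurrent values, if non-empty, is a closed subgroup equal to the set of possible values, hence contains the closed subgroup generated by the support $R(N)$ — yields every $x$ in the group generated by the $N$-th roots of unity; it is worth making explicit that this group really consists of possible values because $-e^{2\pi i k/N}=\sum_{j\ne k}e^{2\pi i j/N}$, so the semigroup of exactly reachable points already contains inverses. Your emphasis on neighborhood versus point recurrence is exactly the right one: for $N=5$, and for every $N$ with $\varphi(N)>2$, the ``lattice'' in the statement is in fact a dense subgroup of $\bC$, so point recurrence genuinely fails (consistent with the paper's preceding computation that the expected number of exact returns to the origin is finite for $N=5$), and only the topological notion survives — which is precisely what the Chung--Fuchs route delivers. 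The one step I would tighten is your aside on the characteristic-function criterion: the criterion reads $\sup_{r<1}\int_{|t|<\delta}\mathrm{Re}\,(1-r\phi(t))^{-1}\,{\rm d}t=\infty$, and passing to it from the divergence of $\int_{|t|<\delta}\mathrm{Re}\,(1-\phi(t))^{-1}\,{\rm d}t$ needs a short Fatou argument (using $\mathrm{Re}\,(1-z)^{-1}\ge 0$ for $|z|\le 1$); since you offer this only as an alternative verification, it does not affect the validity of the proof.
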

By the classical central limit theorem the sequence  $\frac{1}{n^{1/2}} S(N,\beta)_n$ converges to a Gaussian random variable for any $N\in \bN$ and $\beta\in \bC$. However, if we consider a normalized random walk $\tilde S(N,\beta)_n$ defined as
\begin{align*}
\tilde S(N,\beta)_n = \frac{1}{n^{1/N}} S(N,\beta)_n
\end{align*}
then we have convergence to a stable distribution of order $N$ in the sense that (see \cite[Theorem 2]{BoMa15})
\begin{equation}\label{conv-S-n}
\lim_{n \to \infty} \bE[\exp(i \lambda \tilde S(N,\beta)_n)] = \exp \left(\frac{i^N \beta}{N!} \lambda^N \right).
\end{equation}

It is important to remark that for $N>2$ the sequence $\tilde S(N,\beta)_n$ cannot converge in distribution because of the scaling exponent $1/N$. 
In fact, for $N > 2$, the function $\exp(c x^N)$ is not a well defined characteristic function. 
Equation \eqref{conv-S-n} states that even if the distributions of $\tilde S(N,\beta)_n$ cannot converge weakly to a measure, the integral of suitable functions (the exponentials) with respect to these measures has a well defined limit as $n\to \infty$.

It is possible to extend the definition of $\tilde S(N,\beta)_n$ to a continuous time process and construct a sequence of jump processes  $W^{N,\beta}_n(t)$ on the complex plane such that  $\tilde S(N,\beta)_n =W^{N,\beta}_n(1)$. Given  a sequence  $\{\xi_j\}$ of independent copies of the random variable $\xi_{N,\beta}$ defined in \eqref{e2}, let us consider for $t>0$ the process $W^{N,\beta}_n(t)$ defined by 
\begin{align}
W^{N,\beta}_n(0) &= 0;\nonumber \\
W^{N,\beta}_n(t) &=  \frac{1}{n^{1/N}} \sum_{j=1}^{\lfloor nt\rfloor} \xi_j = \frac{1}{n^{1/N}} S(N,\beta)_{\lfloor nt \rfloor}. \label{Wn} 
\end{align}

The  process $W^{N,\beta}_n$ has some interesting properties. The following lemma shows the particular behavior of the complex moments.

\begin{lemma}\label{th-momenti}
For $k \in \bN$ and  $t\in \bR^+$,  the $k$-moment of $W^{N,\beta}_n(t)$ satisfies
\begin{align*}
\bE[(W^{N,\beta}_n(t))^k] =
\begin{cases}
\left(\frac{\beta t}{N!}\right)^{k/N} \frac{k!}{(k/N)!}{\bf 1}_{[0,\lfloor nt\rfloor]}(k/N) + R(n,k{\color{black};t}),  & k = h N,\ h \in \bN, \, 
\\
0, & \text{otherwise}
\end{cases}
\end{align*}
(${\bf 1}_{[0,\lfloor nt\rfloor]}$ being the indicator function of the interval $[0,\lfloor nt\rfloor]$).
\\
{\color{black}For $k = 0$ and $k= N$, i.e., for $h = 0$ and $h=1$, the remainder term vanishes.}
For $k=hN$, $h\in \bN$, $h \ge 2$, the remainder term $R(n,k{\color{black};t})$ satisfies the following estimate:
\begin{align*}
|R(n,hN{\color{black};t})|\leq \frac{|\beta|^ht^{h-1}(h^2+h)}{2n}\frac{(hN)!}{h!(N!)^h}+\frac{|\beta|^h t^{h-1}}{n}\left(\frac{0.792hN}{\log(hN+1)}\right)^{hN}.
\end{align*}
\end{lemma}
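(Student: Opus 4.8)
The plan is to expand the $k$-th power by the multinomial theorem and kill almost every term using the moment table \eqref{e3}. Setting $M=\lfloor nt\rfloor$, so that $W^{N,\beta}_n(t)=n^{-1/N}\sum_{j=1}^{M}\xi_j$ by \eqref{Wn}, independence gives
\begin{equation*}
\bE[(W^{N,\beta}_n(t))^{k}]=\frac{1}{n^{k/N}}\sum_{m_1+\dots+m_M=k}\binom{k}{m_1,\dots,m_M}\prod_{i=1}^{M}\bE[\xi_i^{m_i}].
\end{equation*}
By \eqref{e3} a summand survives only if every exponent $m_i$ is a multiple of $N$; writing $m_i=N\ell_i$ this already forces $N\mid k$, which is the vanishing case. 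When $k=hN$ each surviving product equals $\beta^{h}$, and I am left with the \emph{exact} identity
\begin{equation*}
\bE[(W^{N,\beta}_n(t))^{hN}]=\frac{\beta^{h}}{n^{h}}\sum_{\ell_1+\dots+\ell_M=h}\frac{(hN)!}{\prod_{i=1}^{M}(N\ell_i)!}.
\end{equation*}

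I would then split this sum according to the number of nonzero $\ell_i$. The configurations with all $\ell_i\in\{0,1\}$ (hence exactly $h$ ones) form the \emph{principal part}: there are $\binom{M}{h}$ of them, each contributing $(hN)!/(N!)^{h}$, and since $\binom{M}{h}=0$ for $h>M$ this is precisely where the indicator ${\bf 1}_{[0,\lfloor nt\rfloor]}(h)$ of the main term originates. To replace $\binom{M}{h}/n^{h}$ by $t^{h}/h!$ I would use the telescoping identity
\begin{equation*}
t^{h}-\prod_{j=0}^{h-1}\frac{M-j}{n}=\sum_{j=0}^{h-1}\Big(\prod_{i<j}\frac{M-i}{n}\Big)\Big(t-\frac{M-j}{n}\Big)t^{\,h-1-j},
\end{equation*}
together with the elementary bounds $0\le\frac{M-j}{n}\le t$ and $t-\frac{M-j}{n}=\frac{nt-M+j}{n}<\frac{j+1}{n}$ (using $nt-M<1$); summing $\sum_{j=0}^{h-1}(j+1)=\frac{h^{2}+h}{2}$ then reproduces the first term of the estimate.

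The substantial part is the \emph{remainder part}, collecting all configurations in which some $\ell_i\ge2$. The key observation is that such a configuration occupies at most $h-1$ sites: if $r$ entries are positive and one is $\ge2$, then $h=\sum\ell_i\ge r+1$. Grouping by the $r\le h-1$ occupied sites, this part equals $\sum_{r=1}^{h-1}\binom{M}{r}A_r$, where $A_r=\sum_{\ell_1+\dots+\ell_r=h,\ \ell_j\ge1}(hN)!/\prod_j(N\ell_j)!$ counts ordered partitions of $hN$ labelled points into $r$ nonempty blocks whose sizes are divisible by $N$. Bounding these by \emph{all} ordered set partitions into $r$ blocks yields $A_r\le r!\,S(hN,r)$ (Stirling numbers of the second kind), hence $\binom{M}{r}A_r\le M^{r}S(hN,r)\le M^{h-1}S(hN,r)$ for $r\le h-1$; summing over $r$, using $\sum_r S(hN,r)=B_{hN}$ and $M\le nt$, gives
\begin{equation*}
\frac{|\beta|^{h}}{n^{h}}\sum_{r=1}^{h-1}\binom{M}{r}A_r\ \le\ \frac{|\beta|^{h}\,t^{\,h-1}}{n}\,B_{hN},
\end{equation*}
and inserting the Bell-number bound $B_{hN}\le\big(0.792\,hN/\log(hN+1)\big)^{hN}$ furnishes the second term.

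The hard part is exactly this remainder estimate: one must notice that the constraint ``some $\ell_i\ge2$'' caps the number of occupied sites at $h-1$, so that only $M^{h-1}$ (and not $M^{h}$) appears, gaining one full power of $n$ against the $n^{h}$ in the denominator — this is what makes the remainder $O(1/n)$ — and then control the $n$-independent combinatorial factor by a Bell number. The degenerate cases are immediate: for $h=0$ the sum is its single empty term, equal to the main term, so $R\equiv0$; for $h=1$ the remainder part is empty (no $\ell_i$ can reach $2$), so only the principal-part discrepancy survives, which is the $O(1/n)$ quantity already controlled by the first term of the estimate.
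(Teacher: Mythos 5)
Your proof is correct, and at its combinatorial core it coincides with the paper's; the difference is only the entry point. The paper obtains the moments as derivatives at $0$ of the characteristic function $\bigl(\psi_\xi(\lambda/n^{1/N})\bigr)^{\lfloor nt\rfloor}$ and invokes Fa\`a di Bruno's formula, so its sums are indexed by set partitions of $\{1,\dots,hN\}$ whose blocks have sizes divisible by $N$; your direct multinomial expansion produces exactly the same sums, indexed by compositions $(\ell_1,\dots,\ell_M)$, using nothing beyond independence and \eqref{e3}, which is arguably more elementary (no characteristic-function or Fa\`a di Bruno machinery). After that the two arguments run in parallel: your principal part (all $\ell_i\in\{0,1\}$) is the paper's $m_N=h$ term; your telescoping identity yields precisely the bound $\frac{t^{h-1}(h^2+h)}{2n}$ that the paper derives from its appendix inequality $\prod_j(a_j+b_j)-\prod_j a_j\le\sum_j b_j\prod_{k\neq j}(a_k+b_k)$; and your observation that a configuration with some $\ell_i\ge 2$ occupies at most $h-1$ sites, which buys $M^{h-1}$ instead of $M^{h}$ and hence the crucial factor $1/n$, is exactly the paper's bound $\frac{\lfloor nt\rfloor!}{(\lfloor nt\rfloor-|\pi|)!}\le\lfloor nt\rfloor^{h-1}$ for partitions with $|\pi|\le h-1$, both closed by the Berend--Tassa Bell-number estimate (your intermediate step $A_r\le r!\,S(hN,r)$ is a harmless refinement). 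One point where you are in fact more accurate than the lemma as stated: for $h=1$ your computation gives $R(n,N;t)=\beta\bigl(\lfloor nt\rfloor-nt\bigr)/n$, which is $O(1/n)$ but not identically zero (it vanishes only when $nt\in\bN$), whereas the lemma asserts that the remainder vanishes for $h=1$; the slip is harmless downstream, since in Theorem \ref{teo3} that contribution tends to $0$ anyway, but your version is the correct statement.
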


\begin{proof}
Let $W^{N,\beta}_n(t)=\frac{1}{n^{1/N}}\sum_j^{\lfloor nt\rfloor}\xi_j$ and $\psi_n$ be its characteristic function, namely:
$$\psi_n(\lambda):=\bE[e^{i\lambda W^{N,\beta}_n(t)}]$$
We have that 
$$\bE[(W^{N,\beta}_n(t))^k] =(-i)^k\frac{d^k\psi_n}{d\lambda^k} (0),$$
where $\psi_n$ is equal to
\begin{align*}
\psi_n(\lambda) = \left( \bE\left[\exp(\frac{1}{n^{1/N}} i \lambda \xi)\right] \right)^{\lfloor nt\rfloor} = \left( \psi_\xi\left(\frac{\lambda}{n^{1/N}}\right) \right)^{\lfloor nt\rfloor},
\end{align*}
where $\psi_\xi$ is the characteristic function of $\xi$.\\
By Fa\'a di Bruno's formula \cite{KraPar}
\begin{equation}\label{Faa1}
\frac{d^k\psi_n}{d\lambda^k} (\lambda)=\sum_{\pi\in \Pi}C(|\pi|, \lambda)\prod_{B\in \pi}\left(\frac{\psi_\xi^{(|B|)}(\lambda/n^{1/N})}{n^{|B|/N}}\right)
\end{equation}
where 
$\pi$ runs over the set $\Pi$ of all partitions of the set $\{1,...,k\}$,  
$B\in \pi$ means that the variable $B$ runs through  the list of 
 blocks of the partition $\pi$, 
$|\pi|$ denotes the number of blocks of the partition $\pi$ and $|B|$ is the cardinality of a set $B$, while the function $C:\bN\times \bR\to\bC$ is equal to
\begin{align*}
C(j,\lambda) =
\begin{cases}
\frac{\lfloor nt\rfloor!}{(\lfloor nt\rfloor-j)!}\left(\psi_\xi(\lambda/n^{1/N})\right)^{\lfloor nt\rfloor-j},  & \lfloor nt\rfloor\geq j \, 
\\
0, & \text{otherwise}
\end{cases}
\end{align*} 
 Formula \eqref{Faa1} can be written in the equivalent form:
\begin{multline}\frac{d^k\psi_n}{d\lambda^k} (\lambda)=
\\\sum \frac{k!}{m_1!m_2! \cdots m_k!}\frac{\lfloor nt\rfloor!}{(\lfloor nt\rfloor-(m_1+m_2+\dots+m_k))!}\left(\psi_\xi(\lambda/n^{1/N})\right)^{\lfloor nt\rfloor-(m_1+m_2+\dots+m_k)}\Pi_{j=1}^k\left(\frac{\psi_\xi^{(j)}(\lambda/n^{1/N})}{j!n^{j/N}}\right)^{m_j}\end{multline}
where the sum is over the $k-$tuple of non-negative integers $(m_1,m_2,...,m_k)$ such that $m_1+2m_2+\dots +km_k=k$ and $m_1+m_2+...+m_k\leq \lfloor nt\rfloor$.
In particular we have:
\begin{equation}\label{Faa2}
\frac{d^k\psi_n}{d\lambda^k} (0)=\sum_{\pi\in \Pi}\frac{\lfloor nt\rfloor!}{(\lfloor nt\rfloor-|\pi|)!}\Pi_{B\in \pi}\left(\frac{\psi_\xi^{(|B|)}(0)}{n^{|B|/N}}\right)
\end{equation}
 where the first sum runs over the partitions $\pi$ such that $|\pi|\leq \lfloor nt\rfloor $ or equivalently
\begin{equation}\label{FaadiBruno}
\frac{d^k\psi_n}{d\lambda^k} (0)=\sum \frac{k!}{m_1!m_2! \cdots m_k!}\frac{\lfloor nt\rfloor!}{(\lfloor nt\rfloor-(m_1+m_2+\dots+m_k))!}\Pi_{j=1}^k\left(\frac{\psi_\xi^{(j)}(0)}{j!n^{j/N}}\right)^{m_j}.
\end{equation}
Since $\psi_\xi^{(j)}(0)=(i)^j\bE[\xi^j]$, and $\bE[\xi^j]\neq 0$ iff $j=mN$, with $m\in \bN$, then 
product $\Pi_{j=1}^k\left(\frac{\psi_\xi^{(j)}(0)}{j!n^{j/N}}\right)^{m_j}$ is non vanishing iff $m_j=0$ for $j\neq lN$ and $k=Nm_N+2Nm_{2N}+...$, 
i.e. if $k$  is a multiple of $N$. 
Analogously in the sum appearing in formula \eqref{Faa2} the only terms giving a non vanishig contribution correspond to those partitions $\pi$ having blocks $B$ with a number of elements which is a multiple of $N$, giving, for $k=hN$:
\begin{equation}\label{Faa3}
\frac{d^{hN}\psi_n}{d\lambda^{hN}} (0)=i^{hN}\frac{\beta^h}{n^h}\sum_{\pi\in \Pi}\frac{\lfloor nt\rfloor!}{(\lfloor nt\rfloor-|\pi|)!}
\end{equation}
where again the sum runs over the partitions $\pi$ such that $|\pi|\leq \lfloor nt\rfloor $.
Equivalently:
\begin{eqnarray*}
\frac{d^{hN}\psi_n}{d\lambda^{hN}} (0)&=&\sum \frac{(hN)!}{(m_N)!(m_{2N})! \cdots (m_{hN})!}\frac{\lfloor nt\rfloor!}{(\lfloor nt\rfloor-(m_N+m_{2N}+\dots+m_{hN}))!}\Pi_{l=1}^h\left(\frac{\psi_\xi^{(lN)}(0)}{(lN)!n^{l}}\right)^{m_{lN}},\\
&=&\sum \frac{(hN)!}{(m_N)!(m_{2N})! \cdots (m_{hN})!}\frac{\lfloor nt\rfloor!}{(\lfloor nt\rfloor-(m_N+m_{2N}+\dots+m_{hN}))!}\Pi_{l=1}^h\left(\frac{i^{lN}\beta ^l}{(lN)!n^{l}}\right)^{m_{lN}},\\
&=&\frac{i^{hN}\beta^{h}}{n^{h}}\sum \frac{(hN)!}{(m_N)!(m_{2N})! \cdots (m_{hN})!}\frac{\lfloor nt\rfloor!}{(\lfloor nt\rfloor-(m_N+m_{2N}+\dots+m_{hN}))!}\Pi_{l=1}^h\frac{1}{((lN)!)^{m_{lN}}},
\end{eqnarray*}
where the sum is over the $h$-tuple of non-negative integers $(m_N,m_{2N},...,m_{hN})$ such that $m_N+2m_{2N}+...+hm_{hN}=h$ and $m_N+m_{2N}+...+m_{hN}\leq \lfloor nt\rfloor$.\\
Hence, we have 
$$
\bE[(W^{N,\beta}_n(t))^{hN}]=\beta ^{h}\sum \frac{(hN)!}{(m_N)!(N!)^{m_N}(m_{2N})!(2N)!^{m_{2N}} \cdots (m_{hN})!(hN)!^{m_{hN}}}\frac{\lfloor nt\rfloor!}{n^{h}(\lfloor nt\rfloor-(m_N+m_{2N}+\dots+m_{hN}))!}
$$
When $n\to \infty$, the leading term in the previous sum is the one corresponding to $m_N=h$ (hence $m_2N=...=m_{hN}=0)$, which is equal to 
$$\beta^h\frac{(hN)!}{(m_N)!(N!)^{h}}\frac{\lfloor nt\rfloor!}{n^{h}(\lfloor nt\rfloor-h)!}=\beta^ht^h\frac{(hN)!}{h!(N!)^{h}}+\beta^h\frac{(hN)!}{h!(N!)^{h}}\left(\frac{\lfloor nt\rfloor!}{n^{h}(\lfloor nt\rfloor-h)!}-t^h\right)$$
In the case where $\lfloor nt \rfloor <h$ then this term does not appear in the sum and  we can set it equal to 0. In the case where $\lfloor nt \rfloor \geq h$, we can estimate 
the quantity inside brackets as:
\begin{eqnarray*}
& &\left|\frac{\lfloor nt\rfloor!}{n^{h}(\lfloor nt\rfloor-h)!}-t^h\right|=
\frac{1}{n^h}\left|-(nt)^h+\Pi_{j=0}^{h-1}(\lfloor nt\rfloor-j)\right|
=\frac{1}{n^h}\Big|\Pi_{j=0}^{h-1}\big((\lfloor nt\rfloor-j)+(\{ nt\}+j)\big)-\Pi_{j=0}^{h-1}(\lfloor nt\rfloor-j)\Big|\\
& &\leq \frac{1}{n^h}\sum_{j=0}^{h-1}(\{ nt\}+j)\Pi_{k\neq j}nt=\frac{(nt)^{h-1}}{n^h}\sum_{j=0}^{h-1}(\{ nt\}+j)\leq \frac{(nt)^{h-1}}{n^h}\sum_{j=0}^{h-1}(1+j)=\frac{t^{h-1}(h^2+h)}{2n}
\end{eqnarray*}
where in the second line we have used that if $a_j,b_j\in \bR$, with $a_j,b_j\geq 0$ for all $j=0,...,m$, then (see Appendix)
$$\Pi_{j=0}^m(a_j+b_j)-\Pi_{j=0}^m a_j\leq \sum_{j=0}^m b_j\Pi_{k\neq j}(a_k+b_k).$$
Hence 
\begin{align*}
|R_1(n,h{\color{black};t})| =&\left|\beta^h\frac{(hN)!}{(m_N)!(N!)^{h}}\frac{\lfloor nt\rfloor!}{n^{h}(\lfloor nt\rfloor-h)!}-\beta^ht^h\frac{(hN)!}{h!(N!)^{h}}\right|
\leq  \frac{|\beta|^ht^{h-1}(h^2+h)}{2n}\frac{(hN)!}{h!(N!)^h}.
\end{align*}
By using formula \eqref{Faa3}, the remaining terms in the sum (corresponding to the $h$-tuple $(m_N,m_{2N},...,m_{hN})$ with $m_N<h$) are bounded by
\begin{align*}
R_2(n,h{\color{black};t})= \frac{\beta^h}{n^h}\sum_{\pi\in \Pi}\frac{\lfloor nt\rfloor!}{(\lfloor nt\rfloor-|\pi|)!}-\beta^h\frac{(hN)!}{(m_N)!(N!)^{h}}\frac{\lfloor nt\rfloor!}{n^{h}(\lfloor nt\rfloor-h)!} 
\leq  \frac{\beta^h}{n^h}\sum_{\pi\in \Pi}\lfloor nt\rfloor^{h-1}=\frac{\beta^h}{n}t^{h-1}B_{hN}
\end{align*}
where $B_{hN}$ is the Bell number, i.e. the number of partitions of the set $\{1,...,hN\}$. In particular, for any $h\in \bN$ (see \cite{BeTa}) $ B_{hN}<\left(\frac{0.792hN}{\log(hN+1)}\right)^{hN}$, hence 
$$|R_2(n,h{\color{black};t})|\leq \frac{|\beta|^h t^{h-1}}{n}\left(\frac{0.792hN}{\log(hN+1)}\right)^{hN}.$$
\end{proof}

A direct consequence of Lemma \ref{th-momenti} is the following theorem, which generalizes formula \eqref{conv-S-n} to the sequence of random walks $W^{N,\beta}_n$. 
\begin{theorem}\label{teo3}
For any $\beta\in\bC$ and $N\in \bN$, $N\geq 2$, the sequence of random walks  $W^{N,\beta}_n$ converges weakly  to a $N$-stable process in the sense that for any $t\geq 0 $ and $\lambda \in \bR$ the following holds:
\begin{equation}\label{conv-Wn}
\lim_{n \to \infty} \bE[\exp(i \lambda W^{N,\beta}_n(t))] = \exp \left(\frac{i^N \beta t}{N!} \lambda^N \right).
\end{equation}
\end{theorem}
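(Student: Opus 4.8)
The plan is to expand the characteristic function $\psi_n(\lambda)=\bE[\exp(i\lambda W^{N,\beta}_n(t))]$ into a power series in $\lambda$ and to read the limit directly off the moment asymptotics of Lemma \ref{th-momenti}. The case $t=0$ is trivial, since $W^{N,\beta}_n(0)=0$ gives $\psi_n\equiv 1=\exp(0)$, so I assume $t>0$. Because $\xi_{N,\beta}$ takes values in the finite set $\beta^{1/N}R(N)$, each summand has modulus $|\beta|^{1/N}$ and hence $W^{N,\beta}_n(t)$ is, for fixed $n$, almost surely bounded by $M_n:=n^{-1/N}\lfloor nt\rfloor\,|\beta|^{1/N}$. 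Thus every moment exists with $|\bE[(W^{N,\beta}_n(t))^k]|\le M_n^k$, the Maclaurin series converges absolutely (dominated by $e^{|\lambda|M_n}$), and $\psi_n$ is entire with
\[
\psi_n(\lambda)=\sum_{k=0}^\infty\frac{(i\lambda)^k}{k!}\,\bE[(W^{N,\beta}_n(t))^k]=\sum_{h=0}^\infty\frac{(i\lambda)^{hN}}{(hN)!}\,\bE[(W^{N,\beta}_n(t))^{hN}],
\]
where the second equality uses that by Lemma \ref{th-momenti} only the indices $k=hN$ contribute. Substituting the explicit form $\bE[(W^{N,\beta}_n(t))^{hN}]=\left(\frac{\beta t}{N!}\right)^{h}\frac{(hN)!}{h!}\,{\bf 1}_{[0,\lfloor nt\rfloor]}(h)+R(n,hN;t)$ splits $\psi_n=A_n+B_n$.

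First I would treat the leading part. Cancelling the factor $(hN)!$ and using that the indicator truncates the sum at $h=\lfloor nt\rfloor$ gives
\[
A_n(\lambda)=\sum_{h=0}^{\lfloor nt\rfloor}\frac{(i\lambda)^{hN}}{h!}\left(\frac{\beta t}{N!}\right)^{h},
\]
which is exactly the $\lfloor nt\rfloor$-th partial sum of the everywhere-convergent series $\sum_{h\ge0}\frac{1}{h!}\big(\frac{(i\lambda)^N\beta t}{N!}\big)^h=\exp\!\big(\frac{(i\lambda)^N\beta t}{N!}\big)$. Since $\lfloor nt\rfloor\to\infty$ as $n\to\infty$, the truncation tail tends to $0$ and $A_n(\lambda)\to\exp\!\big(\frac{(i\lambda)^N\beta t}{N!}\big)=\exp\!\big(\frac{i^N\beta t}{N!}\lambda^N\big)$, the claimed right-hand side. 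No interchange of limit and sum is needed here, as $A_n$ is a genuine partial sum converging to its limit.

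It then remains to show that the remainder series $B_n(\lambda)=\sum_{h\ge2}\frac{(i\lambda)^{hN}}{(hN)!}R(n,hN;t)$ vanishes as $n\to\infty$; I would bound $|B_n|$ by feeding the two terms of the estimate on $|R(n,hN;t)|$ from Lemma \ref{th-momenti} into the series. In the first term the factor $(hN)!$ cancels, leaving $\frac{1}{2nt}\sum_{h\ge2}\frac{(|\lambda|^N|\beta|t)^h(h^2+h)}{h!(N!)^h}$, a convergent exponential-type series whose sum is finite and independent of $n$. For the second term I would write $|\beta|^h t^{h-1}=t^{-1}(|\beta|^{1/N}t^{1/N})^{hN}$ and set $z=|\lambda|\,|\beta|^{1/N}t^{1/N}$, so the contribution is
\[
\frac{1}{nt}\sum_{h\ge2}\frac{z^{hN}}{(hN)!}\Big(\frac{0.792\,hN}{\log(hN+1)}\Big)^{hN};
\]
Stirling's formula shows the general term decays super-geometrically in $hN$ (the effective base tends to $0$ because $\log(hN+1)\to\infty$), so this series too converges for every $z$. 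Both pieces are therefore $O(1/n)$ uniformly, hence $B_n(\lambda)\to0$, and adding the two contributions yields \eqref{conv-Wn}.

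The main obstacle is precisely this summability of the remainder: one must ensure that the rapidly growing combinatorial factor in $R(n,hN;t)$ (the Bell-number-type bound, governing the number of set partitions) is overwhelmed once divided by the $(hN)!$ coming from the Taylor coefficient. The key quantitative input is exactly the bound $B_{hN}<\big(\frac{0.792\,hN}{\log(hN+1)}\big)^{hN}$ recorded in Lemma \ref{th-momenti}, which guarantees the decay is strong enough for the series to converge and for the prefactor $n^{-1}$ to drive the entire remainder to zero.
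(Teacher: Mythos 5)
Your proposal is correct and follows essentially the same route as the paper's own proof: expand the characteristic function as a moment series, invoke Lemma \ref{th-momenti} to isolate the leading terms (the partial sums of $\exp\bigl(\frac{i^N\beta t}{N!}\lambda^N\bigr)$ truncated at $h=\lfloor nt\rfloor$), and show the remainder series is $O(1/n)$ by summing the two bounds on $R(n,hN;t)$. Your additional justifications (the a.s.\ boundedness of $W^{N,\beta}_n(t)$ for fixed $n$ to legitimize the moment expansion, and the Stirling argument for convergence of the Bell-number series) are sound refinements of steps the paper leaves implicit.
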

\begin{proof}
\begin{align*}
\lim_{n \to \infty} \bE[\exp(i \lambda W^{N,\beta}_n(t))] &=\lim_{n \to \infty} \lim_{m \to \infty}\sum_{k=0}^m\frac{1}{k!}i ^k\lambda^k \bE[(W^{N,\beta}_n(t))^k]\\
&=\lim_{n \to \infty}\sum_{h=0}^{\lfloor nt\rfloor}\frac{i ^{hN}\lambda^{hN} }{(hN)!}\left(\frac{\beta t}{N!}\right)^h\frac{(hN)!}{h!}+ \lim_{n \to \infty} \lim_{m \to \infty}\sum_{h=2}^m \frac{i ^{hN}\lambda^{hN} }{(hN)!}R(n,hN{\color{green};t})
\\
&=\exp \left(\frac{i^N \beta t}{N!} \lambda^N \right)
\end{align*}
Indeed for $m,n\in \bN$
$$
\left|\sum_{h=2}^m \frac{i ^{hN}\lambda^{hN} }{(hN)!} R(n,hN{\color{black};t})\right|\leq \frac{C}{n},
\qquad C:= \sum_{h=2}^\infty t^{h-1}\frac{|\beta|^h|\lambda|^{hN} }{(hN)!}\left(\frac{(h^2+h)(hN)!}{2h!(N!)^h}+\left(\frac{0.792hN}{\log(hN+1)}\right)^{hN}\right)<\infty.
$$
\end{proof}

\begin{theorem}\label{th-limit}
Let $f:\bC\to \bC$ be an entire analytic function with the power series expansion $f(z)=\sum_{k=0}^\infty a_kz^k$, such that   
the coefficients $\{a_k\}$ satisfy the following condition:
\begin{equation} \label{coefficient}
\sum_{h=2}^\infty |a_{hN}|c^h\left(\frac{hN}{\log(hN+1)}\right)^{hN}<\infty\qquad \forall c\in \bR^+
\end{equation}
Then 
\begin{align*}
\lim_{n\to\infty}\bE[f(W^{N,\beta}_n(t))] = \sum_{h=0}^\infty a_{hN}\frac{(hN)!}{h!}\left(\frac{\beta t}{N!}\right)^h
 = \sum_{h=0}^\infty \frac{f^{(hN)}(0)}{h!}\left(\frac{\beta t}{N!}\right)^h.
\end{align*}
\end{theorem}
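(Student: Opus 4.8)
The plan is to mimic the proof of Theorem \ref{teo3}, replacing the exponential $z\mapsto \exp(i\lambda z)$ by the general entire function $f$ and using the moment formula of Lemma \ref{th-momenti}. First I would fix $n$ and $t$ and note that $W^{N,\beta}_n(t)$ is a bounded random variable taking only finitely many values: since every $\xi_j$ satisfies $|\xi_j|=|\beta|^{1/N}$, one has $|W^{N,\beta}_n(t)|\le \lfloor nt\rfloor\,|\beta|^{1/N}/n^{1/N}$. Because $f$ is entire its Taylor series converges absolutely and uniformly on every disc, so on the finite range of $W^{N,\beta}_n(t)$ I may interchange the expectation (a finite weighted sum over the possible values) with the power series and write $\bE[f(W^{N,\beta}_n(t))]=\sum_{k=0}^\infty a_k\,\bE[(W^{N,\beta}_n(t))^k]$.

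Next I would insert Lemma \ref{th-momenti}. Since the moments vanish unless $k=hN$, only those indices survive; splitting each surviving moment into its main part and the remainder $R(n,hN;t)$, and using that the indicator ${\bf 1}_{[0,\lfloor nt\rfloor]}(h)$ truncates the main part while $R$ vanishes for $h=0,1$, gives the two-term decomposition
\[
\bE[f(W^{N,\beta}_n(t))] = \sum_{h=0}^{\lfloor nt\rfloor} a_{hN}\,\frac{(hN)!}{h!}\left(\frac{\beta t}{N!}\right)^h + \sum_{h=2}^\infty a_{hN}\,R(n,hN;t) =: A_n + B_n .
\]
This is exactly the analogue of the splitting used for the exponential in the proof of Theorem \ref{teo3}.

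The heart of the argument is to show that condition \eqref{coefficient} makes both series absolutely convergent, the bound on $B_n$ being uniform in $n$. Using the estimate for $|R(n,hN;t)|$ from Lemma \ref{th-momenti} I would bound $|B_n|\le C/n$ with
\[
C := \sum_{h=2}^\infty |a_{hN}|\,|\beta|^h t^{h-1}\left(\frac{h^2+h}{2}\,\frac{(hN)!}{h!(N!)^h} + \left(\frac{0.792\,hN}{\log(hN+1)}\right)^{hN}\right),
\]
and the very same factors control the absolute convergence of the limiting series $\sum_h a_{hN}\frac{(hN)!}{h!}(\beta t/N!)^h$. The Bell-number summand is governed directly by \eqref{coefficient}. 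The delicate point, and the step I expect to be the main obstacle, is that the first summand carries the factor $(hN)!/(h!(N!)^h)$, and one must check that \eqref{coefficient} dominates it too. This is not obvious at face value, since $(hN)!$ by itself grows \emph{faster} than $(hN/\log(hN+1))^{hN}$; the saving comes entirely from dividing by $h!(N!)^h$. Indeed, Stirling's formula shows $(hN)!/(h!(N!)^h)$ behaves like $h^{h(N-1)}e^h$ up to polynomial factors, whereas $\log\big((hN/\log(hN+1))^{hN}\big)=hN\log h + hN\log N - hN\log\log(hN+1)$. Comparing exponents, the gain $hN\log h - h(N-1)\log h = h\log h$ beats the loss $hN\log\log(hN+1)$ (as $\log h$ dominates $\log\log(hN+1)$), so that for every $c>0$ one has $|\beta|^h t^{h-1}(h^2+h)\,(hN)!/(h!(N!)^h)\le c^h\,(hN/\log(hN+1))^{hN}$ for all large $h$. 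Hence $C<\infty$ and the limiting series converges absolutely.

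Finally I would let $n\to\infty$. Since $|B_n|\le C/n\to 0$ the remainder disappears, while for $t>0$ one has $\lfloor nt\rfloor\to\infty$, so the truncated sum $A_n$ converges to the full, absolutely convergent series $\sum_{h=0}^\infty a_{hN}\frac{(hN)!}{h!}(\beta t/N!)^h$ (the case $t=0$ being trivial, both sides equalling $a_0$). This establishes the first claimed identity, and the second follows from $f^{(hN)}(0)=(hN)!\,a_{hN}$.
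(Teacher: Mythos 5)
Your proof is correct and follows essentially the same route as the paper: interchange expectation with the power series, apply Lemma \ref{th-momenti} to split $\bE[f(W^{N,\beta}_n(t))]$ into the truncated main sum plus the remainder series, and bound the latter by $C/n$ with exactly the paper's constant $C$. The one difference is that you explicitly verify the delicate point --- that condition \eqref{coefficient} also dominates the summand carrying the factor $(hN)!/(h!(N!)^h)$, via the Stirling comparison $h\log h$ versus $hN\log\log(hN+1)$ --- whereas the paper's proof simply asserts that $C<\infty$ ``thanks to condition \eqref{coefficient}''; your verification is sound and fills in that unstated step.
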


\begin{proof}
\begin{align*}
\lim_{n \to \infty} \bE[f(W^{N,\beta}_n(t))] &= \lim_{n \to \infty} \sum_{k=0}^\infty a_k\bE[(W^{N,\beta}_n(t))^k]
\\
&=\lim_{n \to \infty}\sum_{h=0}^{\lfloor nt\rfloor}a_{hN}\left(\frac{\beta t}{N!}\right)^h\frac{(hN)!}{h!}+\lim_{n \to \infty} 
\sum_{h=2}^\infty a_{hN}R(n,hN{\color{black};t})
\\
&=\sum_{h=0}^{\infty}a_{hN}\left(\frac{\beta t}{N!}\right)^h\frac{(hN)!}{h!}
\end{align*}
Indeed, by assumption \eqref{coefficient}, we have 
$$\left|\sum_{h=2}^\infty a_{hN}R(n,hN,t)\right|\leq \frac{C}{n}, \qquad C:=\sum_{h=2}^\infty |a_{hN}|\left(\frac{|\beta|^ht^{h-1}(h^2+h)}{2}\frac{(hN)!}{h!(N!)^h}+|\beta|^h t^{h-1}\left(\frac{0.792hN}{\log(hN+1)}\right)^{hN}\right)<\infty$$
where the series on the right hand side is convergent thanks to condition \eqref{coefficient}.
\end{proof}

\begin{remark}
\label{lemmacrescita}
Let us discuss further the assumption \eqref{coefficient}.

\begin{enumerate}
\item First, we provide the following simple, yet widely applicable, condition about the coefficients $\{a_k\}$:
\begin{equation}
\label{eq:lemmacrescita}
\mbox{
\begin{minipage}{.9\textwidth}
there exist $C_1,C_2\in\bR$ such that for all $k$ the  coefficients $a_k$ satisfy the inequality $|a_k|\leq \frac{C_1\,C_2^k}{k!}$.
\end{minipage}
}
\end{equation}
Then, if condition \eqref{eq:lemmacrescita} holds, the sequence of coefficients $\{a_k\}$ 
satisfies assumption \eqref{coefficient}.

\item Recall that an analytic function $f$ is said of {\em exponential type $c$} if 
$f(z)=\sum_{k=0}^\infty a_k z^k$ and $(a_k k!)^{1/k}\to c$ as $k\to \infty$.
\item If $f$ is of exponential type, then it satisfies assumption \eqref{coefficient}, since \eqref{eq:lemmacrescita} holds.

\item If $f:\bC\to\bC$ is the Fourier transform of a complex bounded variation measure $\mu$ on $\bR$ with compact support, then $f$ is of exponential type,
hence, 
in particular, 
it satisfies assumption \eqref{coefficient}. 

\item If $f:\bC\to\bC$ is the Fourier transform of a complex bounded variation measure $\mu$ on $\bR$ with compact support, i.e.
\begin{align*}
f(x)=\int_\bR e^{ixy} \, {\rm d}\mu (y),
\end{align*}
then for all $t\in\bR^+, x\in \bR$ it holds
\begin{align}
\label{eq:cor-6}
\lim_{n\to \infty }\bE[f(x+W^{N,\beta}_n(t))] = \int e^{iyx} e^{i^n\beta t \frac{y^N}{N!}} \, {\rm d}\mu(y).
\end{align}
\end{enumerate}
\end{remark}

\section{A sequence of subordinated processes}

Given a positive integer $N\in \bN$ with  $N\geq 2$ and a constant $\alpha \in (0,1)$, in the present section we construct by means of Bochner's subordination a sequence of jump processes on the complex plane converging weakly (in the sense of theorem \ref{teo13} stated below) to an $N\alpha $-stable process.  
Our aim is the derivation of the limit of subordinated processes $W_n^{N,\beta}(H^\alpha(t))$, where $W_n^{N,\beta}$ is the sequence of complex random walks defined in section \ref{sez-2} and $H^\alpha (t)$, $\alpha \in (0,1) $ and $t\geq 0$, is the $\alpha $-stable subordinator.
\par 
The first step is the construction of a sequence $\{S^\alpha_m(t)\}_{m\in \bN}$ of compound Poisson processes (with finite moments of any order) approximating the $\alpha$-stable subordinator $H^\alpha(t)$ (see Appendix \ref{app-A} for the definition of $\alpha-$ stable processes).
\\
Let $S^\alpha_m(t)$ be defined by
\begin{align}\label{Sm}
S^\alpha_m(t) = \frac{1}{m} \sum_{j=0}^{X} Y_{j}
\end{align}
where $X \sim {\rm Po}(\lambda t m^{2\alpha})$ is a Poisson random variable of parameter $\lambda \, t \, m^{2\alpha}$, with $\lambda=(\Gamma (1-\alpha))^{-1}$, 
and $Y_j$ are independent identically distributed\footnote{For notational simplicity we do not write explicitly the dependence of $Y_j$ and $X$ on the index $m$} copies of the random variable $Y_{[m]}$, with density $f_m(y) = c_m \, y^{-\alpha-1} \, {\pmb 1}_{\left(\frac{1}{m},m^2\right)}(y)$, $c_m =  \frac{\alpha}{m^{\alpha}(1 - m^{-3\alpha})}$ (see appendix \ref{powerlaw}), $X$ and the $\{Y_j\}$ being independent as well.

\begin{theorem}
The sequence of random variables $\{S^\alpha _m(t)\}_m$ converges weakly to the $\alpha$-stable subordinator $H^\alpha(t)$:
\begin{equation}
\label{eq:approx_poisson_comp}
\lim _{m\to \infty}\bE\left[ e^{i y S^\alpha _m(t)} \right] =\exp \left(-t (-i \, y)^\alpha \right), \quad y\in \bR.
\end{equation}
\end{theorem}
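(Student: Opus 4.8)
The plan is to compute the characteristic function of $S^\alpha_m(t)$ explicitly from its compound-Poisson structure and then pass to the limit $m\to\infty$. Writing $\phi_m(u):=\bE[e^{iuY_{[m]}}]=c_m\int_{1/m}^{m^2}e^{iuv}v^{-\alpha-1}\,{\rm d}v$ for the characteristic function of a single jump and conditioning on the Poisson variable $X\sim{\rm Po}(\lambda t m^{2\alpha})$, one obtains
\begin{equation*}
\bE\bigl[e^{iyS^\alpha_m(t)}\bigr]=\phi_m(y/m)\,\exp\Bigl(\lambda t\,m^{2\alpha}\bigl(\phi_m(y/m)-1\bigr)\Bigr),
\end{equation*}
the extra prefactor $\phi_m(y/m)$ arising because the sum in \eqref{Sm} starts at $j=0$. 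Using $\int f_m=1$ and the change of variables $w=v/m$ gives
\begin{equation*}
\phi_m(y/m)-1=c_m\int_{1/m}^{m^2}\bigl(e^{iyv/m}-1\bigr)v^{-\alpha-1}\,{\rm d}v=c_m m^{-\alpha}\int_{1/m^2}^{m}\bigl(e^{iyw}-1\bigr)w^{-\alpha-1}\,{\rm d}w,
\end{equation*}
so the exponent equals $\lambda t\,m^{\alpha}c_m\int_{1/m^2}^m(e^{iyw}-1)w^{-\alpha-1}\,{\rm d}w$, where the elementary identity $m^\alpha c_m=\alpha/(1-m^{-3\alpha})\to\alpha$ isolates the surviving constant.

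The heart of the argument is the limiting integral. First I would record the classical formula for the Lévy measure of the $\alpha$-stable subordinator, namely $\lambda\alpha\int_0^\infty(1-e^{-sw})w^{-\alpha-1}\,{\rm d}w=s^\alpha$ for $s>0$ and $0<\alpha<1$ (with $\lambda=\Gamma(1-\alpha)^{-1}$), which reproduces the Laplace exponent $\bE[e^{-sH^\alpha(t)}]=e^{-ts^\alpha}$ recalled in the Introduction. Both sides are analytic in the right half-plane $\{{\rm Re}\,s>0\}$ and extend continuously to the imaginary axis, so setting $s=-iy$ with the principal branch $(-iy)^\alpha=|y|^\alpha e^{-i\,{\rm sgn}(y)\alpha\pi/2}$ yields
\begin{equation*}
\lambda\alpha\int_0^\infty\bigl(e^{iyw}-1\bigr)w^{-\alpha-1}\,{\rm d}w=-(-iy)^\alpha,\qquad y\in\bR.
\end{equation*}

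It then remains to justify that $\int_{1/m^2}^m(e^{iyw}-1)w^{-\alpha-1}\,{\rm d}w\to\int_0^\infty(e^{iyw}-1)w^{-\alpha-1}\,{\rm d}w$. This is where the bookkeeping lies, though it is routine: the integrand is dominated by $g(w):=\min(|y|w,2)\,w^{-\alpha-1}$, which is integrable on $(0,\infty)$ since $g(w)\sim|y|w^{-\alpha}$ near $0$ and $g(w)\le 2w^{-\alpha-1}$ near $\infty$, both integrable precisely because $0<\alpha<1$; dominated convergence then absorbs the two moving endpoints $1/m^2\downarrow0$ and $m\uparrow\infty$. Combining these facts, the exponent converges to $t\lambda\alpha\int_0^\infty(e^{iyw}-1)w^{-\alpha-1}\,{\rm d}w=-t(-iy)^\alpha$, while the prefactor satisfies $\phi_m(y/m)=1+O(m^{-2\alpha})\to1$ by the same change-of-variables estimate, so that $\bE[e^{iyS^\alpha_m(t)}]\to\exp(-t(-iy)^\alpha)$, which is \eqref{eq:approx_poisson_comp}. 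The only genuinely delicate point is the analytic continuation and the correct choice of branch for $(-iy)^\alpha$; everything else reduces to the dominated-convergence estimate above.
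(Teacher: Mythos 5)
Your proof is correct and follows essentially the same route as the paper's: compute the compound-Poisson characteristic function, change variables $w=v/m$, use $m^\alpha c_m\to\alpha$, and identify the limiting integral with $-(-iy)^\alpha$ via the Bernstein-function representation (the paper delegates this last identity, including its extension to the imaginary axis, to its Appendix A). You additionally make explicit two points the paper glosses over --- the prefactor $\phi_m(y/m)$ arising because the sum in the definition of $S^\alpha_m(t)$ starts at $j=0$, and the dominated-convergence argument handling the moving endpoints $1/m^2\downarrow 0$, $m\uparrow\infty$ --- but these are refinements of the same argument, not a different approach.
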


\begin{proof}
\begin{align*}
\bE\left[ e^{i y S^\alpha _m(t)} \right] 
= \bE\left[ \left( \bE [e^{i \frac{y}{m} Y_j} ]\right)^{X} \right]
= \exp\left(- \lambda \, t \, m^{2\alpha} \left(1 - \bE\left[e^{i \frac{y}{m} Y_{[m]}} \right] \right) \right)
\\
= \exp\left(- \lambda \, t \, m^{2\alpha} \, c_m \, \int_{1/m}^{m^2} \left(1 - e^{i \frac{y}{m} z}  \right) \, z^{-\alpha - 1} \, {\rm d} z\right)
\end{align*}
by means of a change of variable $x = z/m$ we get
\begin{align*}
\bE\left[ e^{i y S^\alpha _m(t)} \right] 
= \exp\left(- \lambda \, t \, m^{2\alpha} \, c_m \, \int_{1/m^2}^{m} \left(1 - e^{i y x}  \right) \, x^{-\alpha - 1} m^{-\alpha-1+1} \, {\rm d} x\right)
\\
= \exp\left(- \lambda \, t \, m^{\alpha} \, c_m \, \int_{1/m^2}^{m} \left(1 - e^{i y x}  \right) \, x^{-\alpha - 1} \, {\rm d} x\right)
\end{align*}
Since $\lim_{m \to \infty}m^\alpha c_m=\alpha$, 
we eventually obtain
\begin{align*}
\lim_{m\to \infty}\bE\left[ e^{i y S^\alpha_m(t)} \right] 
=
 \exp\left(-  \, t  \, \frac{\alpha}{\Gamma(1-\alpha)}\int_{0}^{\infty} \left(1 - e^{i y x}  \right) \, x^{-\alpha - 1} \, {\rm d} x\right)
 = \exp \left(- t (-i \, y)^\alpha \right)
 \end{align*}
\end{proof}
\begin{remark}
Formula \eqref{eq:approx_poisson_comp} remains valid by replacing $iy$, for $y \in \bR$, with a complex variable $z\in\bC$ 
with ${\rm Re} (z)\leq 0$
\begin{align}\label{eq:approx_poisson_comp-gen}
\lim_{m\to \infty}\bE\left[ e^{z S^\alpha _m(t)} \right] 
=
 \exp\left(-  \, t  \, \frac{\alpha}{\Gamma(1-\alpha)}\int_{0}^{\infty} \left(1 - e^{z x}  \right) \, x^{-\alpha - 1} \, {\rm d} x\right)
 = \exp \left(- t (-z)^\alpha \right)
 \end{align}
\end{remark}
\begin{lemma}
There exists a constant $C(\alpha)\in\bR^+$ such that the 
moments of the compound Poisson process $S^\alpha _m(t)$ given by \eqref{Sm} satisfy the following 
estimate 
\begin{align}\label{eq:st_mom_comp-2}
\bE[(S^\alpha _m(t))^k] &\le C(\alpha)^k t^k m^{k+2\alpha k-3\alpha}  \left(\frac{ k+1}{\log(k+2)}\right)^{k+1}
\end{align}
for every $k \ge 1$ and for every $m\geq (\Gamma (1-\alpha)/t)^{1/2\alpha}$ 
\end{lemma}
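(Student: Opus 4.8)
The plan is to compute the moments of $S^\alpha_m(t)$ through its cumulant structure, exactly as the characteristic function was handled in the preceding theorem, and then to control the resulting sum over set partitions. Writing $\mu := \lambda\, t\, m^{2\alpha}$, the computation in the proof of the previous theorem shows that the characteristic function of $S^\alpha_m(t)$ is $\exp\big(g(y)\big)$ with $g(y) = \mu\big(\bE[e^{iy Y_{[m]}/m}]-1\big)$. Since the outer function is the exponential, Fa\`a di Bruno's formula (used already in Lemma \ref{th-momenti}) applied to $e^{g}$ at $y=0$, together with $g^{(j)}(0) = \mu\, i^{j}\, m^{-j}\,\bE[Y_{[m]}^{j}]$ for $j\ge 1$ and $g(0)=0$, yields the moment--cumulant identity
\[
\bE[(S^\alpha_m(t))^{k}] = \frac{1}{m^{k}}\sum_{\pi\in\Pi}\mu^{|\pi|}\prod_{B\in\pi}\bE\big[Y_{[m]}^{|B|}\big],
\]
where $\Pi$ is the set of all partitions of $\{1,\dots,k\}$.

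First I would estimate the moments of the jump variable. A direct integration gives $\bE[Y_{[m]}^{n}] = \frac{c_m}{n-\alpha}\big(m^{2(n-\alpha)}-m^{-(n-\alpha)}\big)$ for $n\ge 1$; dropping the negative term and inserting $c_m = \alpha\, m^{-\alpha}(1-m^{-3\alpha})^{-1}$ gives, for $m\ge 2$ and $n\ge 1$, the bound $\bE[Y_{[m]}^{n}]\le A(\alpha)\, m^{2n-3\alpha}$, where $A(\alpha) := \frac{\alpha}{(1-\alpha)(1-2^{-3\alpha})}$ absorbs the factors $\frac{1}{n-\alpha}\le\frac{1}{1-\alpha}$ and $\frac{1}{1-m^{-3\alpha}}\le\frac{1}{1-2^{-3\alpha}}$ (the single degenerate value $m=1$ is irrelevant, as there the support of $Y_{[m]}$ is empty).

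Next I would bound each partition term. For a partition $\pi$ with $p=|\pi|$ blocks of sizes $b_1,\dots,b_p$ (so $\sum_i b_i=k$), the product $\prod_i \bE[Y_{[m]}^{b_i}]$ is at most $A(\alpha)^{p}\, m^{2k-3\alpha p} \le \bar A(\alpha)^{k}\, m^{2k-3\alpha}$, using $p\ge 1$, $m\ge 1$, and $\bar A(\alpha):=\max(1,A(\alpha))$. The crucial point is the factor $\mu^{|\pi|}$: here the hypothesis $m\ge (\Gamma(1-\alpha)/t)^{1/2\alpha}$ enters, for it is exactly equivalent to $\mu=\lambda\, t\, m^{2\alpha}\ge 1$, which lets me bound $\mu^{|\pi|}\le\mu^{k}=(\lambda t)^{k} m^{2\alpha k}$ uniformly over all partitions regardless of the number of blocks. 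Combining the two estimates, every term is at most $(\lambda t\,\bar A(\alpha))^{k}\, m^{2\alpha k+2k-3\alpha}$, and after the prefactor $m^{-k}$ each contributes at most $(\lambda t\,\bar A(\alpha))^{k}\, m^{k+2\alpha k-3\alpha}$.

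Finally I would sum over partitions: there are $B_k$ of them, $B_k$ being the $k$-th Bell number, and the estimate $B_k<\big(\tfrac{0.792 k}{\log(k+1)}\big)^{k}\le\big(\tfrac{k+1}{\log(k+2)}\big)^{k+1}$ from \cite{BeTa} (already invoked in Lemma \ref{th-momenti}) gives the claim with $C(\alpha):=\lambda\,\bar A(\alpha)=\bar A(\alpha)/\Gamma(1-\alpha)$. The one point requiring care is the bookkeeping of the exponent of $m$: the genuine leading power per term is $m^{k-\alpha p}$, maximal at $p=1$, so the stated exponent $k+2\alpha k-3\alpha$ is far from tight; it arises precisely because I over-estimate $\mu^{|\pi|}$ by $\mu^{k}$, which is what makes the bound uniform in the number of blocks. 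This over-estimate is harmless, since one checks $k-\alpha\le k+2\alpha k-3\alpha$ for all $k\ge 1$, so no term is ever lost in the resulting bound.
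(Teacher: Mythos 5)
Your proof is correct in substance, but it takes a genuinely different route from the paper's. The paper expands $(S^\alpha_m(t))^k$ as a $k$-fold sum, bounds every mixed moment $\bE[Y_{j_1}\cdots Y_{j_k}]$ uniformly by $c(\alpha)^k m^{2k-3\alpha}$ via \eqref{eq:st_mom_Y_m-app}, counts the $(x+1)^k$ choices of indices, reduces $\sum_x \bP(X=x)(x+1)^k$ to $\frac{1}{\mu}\bE[X^{k+1}]$ (with $\mu=\lambda t m^{2\alpha}$) by a Poisson index shift, and then gets the factor $\bigl(\frac{k+1}{\log(k+2)}\bigr)^{k+1}$ from the Bell-number bound \eqref{eq:st_Poisson} on the $(k+1)$-st Poisson moment. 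You instead keep the full partition structure through the moment--cumulant identity (Fa\`a di Bruno applied to $e^{g}$, in the spirit of Lemma \ref{th-momenti}), bound each partition term separately, and put the Bell-number bound on the number $B_k$ of partitions. The two arguments are combinatorially parallel (the Poisson moment is itself a partition sum, weighted by $\mu^{|\pi|}$), both invoke the hypothesis $m\geq(\Gamma(1-\alpha)/t)^{1/2\alpha}$, i.e.\ $\mu\ge 1$, at exactly the same spot, and yours has the merit of making transparent that the genuine per-term power of $m$ is $m^{k-\alpha|\pi|}$.

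Two caveats. The more serious one: definition \eqref{Sm} reads $S^\alpha_m(t)=\frac1m\sum_{j=0}^{X} Y_j$, i.e.\ $X+1$ summands, so the characteristic function is $\phi_{Y}(y/m)\,e^{g(y)}$ rather than $e^{g(y)}$, and your moment--cumulant identity is exact only for the sum over $1\le j\le X$. You inherited this from the paper itself (its weak-convergence theorem computes the characteristic function with $X$ terms), but note that the paper's own proof of this lemma does account for the extra summand --- that is precisely where its $(x+1)^k$ count and the $(k+1)$-st Poisson moment come from. The fix costs a few lines: by the Leibniz rule, $\bE[(S^\alpha_m(t))^k]=\sum_{l=0}^k\binom{k}{l}m^{-l}\bE[Y_{[m]}^l]\,\bE[\tilde S^{\,k-l}]$, where $\tilde S$ denotes the $X$-term sum; bounding each summand exactly as you did (with $\mu^{|\pi|}\le\mu^{k-l}\le\mu^{k}$ and your $\bar A(\alpha)$) gives $\bE[(S^\alpha_m(t))^k]\le(\lambda t\,\bar A(\alpha))^k m^{k+2\alpha k-3\alpha}\sum_{l=0}^k\binom{k}{l}B_{k-l}$, and the Bell recurrence $\sum_{l=0}^k\binom{k}{l}B_{k-l}=B_{k+1}$ lands you exactly on the paper's factor $\bigl(\frac{0.792(k+1)}{\log(k+2)}\bigr)^{k+1}$. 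The minor caveat: your final chain asserts $\bigl(\frac{0.792k}{\log(k+1)}\bigr)^{k}\le\bigl(\frac{k+1}{\log(k+2)}\bigr)^{k+1}$, which is true but not immediate, since $\log(k+2)>\log(k+1)$ pushes the wrong way; it is cleaner to use monotonicity of Bell numbers, $B_k\le B_{k+1}<\bigl(\frac{0.792(k+1)}{\log(k+2)}\bigr)^{k+1}\le\bigl(\frac{k+1}{\log(k+2)}\bigr)^{k+1}$, which also removes the need for any numerical check.
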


\begin{proof}
\begin{align*}
\bE[(S^\alpha _m(t))^k] = \bE \left[ \frac{1}{m^k} \sum_{j_1 = 0}^{X} \dots \sum_{j_k = 0}^X Y_{j_1} \dots Y_{j_k} \right]
\end{align*}
so we can rearrange the expectations and the sums as follows
\begin{align*}
\bE[(S^\alpha _m(t))^k] = \frac{1}{m^k} \sum_{x=0}^\infty \bP(X = x) \left( \sum_{j_1,\dots , j_k = 0}^{x}\bE\left[Y_{j_1} \dots Y_{j_k} \right]\right)
\end{align*}

By the estimate \eqref{eq:st_mom_Y_m-app} (see appendix \ref{app-B}) we obtain that for any choice of indexes $j_1,\dots , j_k = 0,...,x$ the following inequality holds:
$$\bE\left[Y_{j_1} \dots Y_{j_k} \right]\leq c(\alpha)^k m^{2k-3\alpha}$$
where $c(\alpha)=1\vee \frac{\alpha}{1-\alpha}$, hence, by estimate \eqref{eq:st_Poisson}
: 
\begin{align*}
\bE[(S^\alpha _m(t))^k] &\leq c(\alpha)^k m^{k-3\alpha} \sum_{x=0}^\infty \bP(X = x) (x+1)^k\\ 
 &= c(\alpha)^k m^{k-3\alpha}\frac{\Gamma(1-\alpha)}{ t m^{2\alpha}} \sum_{x=1}^\infty \bP(X = x) x^{k+1} \\
  &\le  c(\alpha)^k m^{k-3\alpha}\frac{\Gamma(1-\alpha)}{ t m^{2\alpha}}\left(1\vee \left(\frac{tm^{2\alpha}}{\Gamma (1-\alpha)}\right)^{k+1}\right)B_{k+1} \\
&\le  c(\alpha)^k m^{k-3\alpha}\frac{\Gamma(1-\alpha)}{ t m^{2\alpha}}\left(1\vee \left(\frac{tm^{2\alpha}}{\Gamma (1-\alpha)}\right)^{k+1}\right)  \left(\frac{0.792 (k+1)}{\log(k+2)}\right)^{k+1}
 \end{align*}
 In particular for $m$ sufficiently large, i.e.  for $m\geq (\Gamma (1-\alpha)/t)^{1/2\alpha}$, the following holds
 $$\bE[(S^\alpha _m(t))^k] \leq  C(\alpha )^kt^km^{k+2\alpha k-3\alpha}\left(\frac{ k+1}{\log(k+2)}\right)^{k+1}$$
 where $C(\alpha )=c(\alpha)0.792/\Gamma (1-\alpha)$.

\end{proof}

Let us now consider the sequence of jump processes $\{W^{N,\beta}_n(t)\}_n$ described in Section \ref{sez-2} and for any couple $(n,m)\in \bN^2$  let us consider the subordinated process $X_{n,m}(t)=W^{N,\beta}_n(S^\alpha _m(t))$.  We can think of $X_{n,m}$ as a jump process in the complex plane, where a random number of jumps, uniformly distributed on the set $\frac{\beta^{1/N}}{n^{1/N}}R(N)$, occur, namely:
\begin{equation}
\label{def-Xnm}
X_{n,m}(t)=\frac{1}{n^{1/N}}\sum_{j=1}^{\lfloor nS^\alpha _m(t)\rfloor}\xi_j,
\end{equation}
where $\xi_j$ are i.i.d.\ uniformly distributed on the set $\beta^{1/N}R(N)$ (see Eq. \eqref{e2}).

The following theorem is the analog of Theorem \ref{teo3} for processes that are driven by subordinators, and shows that $X_{n,m}(t)$ converges in a suitable sense to an $N\alpha $-stable process.

\begin{theorem}\label{teo13}
Let the parameters $N\in \bN$ and $\beta \in \bC$ be chosen in such a way that the following inequality is satisfied 
\begin{equation}\label{cond-bN}{\rm Re }((-i)^N\beta y ^N)\leq 0\quad\forall y\in \bR.
\end{equation}
Then
the following holds
$$\lim_{m\to\infty}\lim_{n\to\infty } \bE[e^{-iy W^{N,\beta}_n(S^\alpha _m(t))}]=e^{- t\left((-1)^{N+1}i^{N}y^N\frac{\beta}{N!}\right)^\alpha} $$
\end{theorem}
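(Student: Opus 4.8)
The plan is to exploit the independence of the complex random walk $W^{N,\beta}_n$ from the subordinating process $S^\alpha_m(t)$, treating the statement as an iterated limit in which the inner limit $n\to\infty$ is dealt with by conditioning while the outer limit $m\to\infty$ reduces to formula \eqref{eq:approx_poisson_comp-gen}. Since $W^{N,\beta}_n$ and $S^\alpha_m(t)$ are built from independent families of random variables, conditioning on the value $s$ of $S^\alpha_m(t)$ gives
\[
\bE[e^{-iy W^{N,\beta}_n(S^\alpha_m(t))}] = \int_0^\infty \psi_n(s)\,\mu_m({\rm d}s), \qquad \psi_n(s):=\bE[e^{-iy W^{N,\beta}_n(s)}],
\]
where $\mu_m$ denotes the law of $S^\alpha_m(t)$. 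For each fixed $s\ge 0$, Theorem \ref{teo3} applied with $\lambda=-y$ yields the pointwise limit $\psi_n(s)\to \exp(z\,s)$ as $n\to\infty$, where I set $z:=\frac{(-i)^N\beta y^N}{N!}$ (using $(-1)^N i^N=(-i)^N$). By hypothesis \eqref{cond-bN} this constant satisfies ${\rm Re}(z)\le 0$, which is exactly the sign condition that makes both remaining steps work.

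The first genuine step is to move the limit $n\to\infty$ inside the integral against $\mu_m$ by dominated convergence. Writing $\psi_n(s)=\big(\psi_\xi(-y/n^{1/N})\big)^{\lfloor ns\rfloor}$ and using the moment property \eqref{e3} (which forces $\bE[\xi^j]=0$ for $0<j<N$ and $\bE[\xi^N]=\beta$), a Taylor expansion gives $\psi_\xi(-y/n^{1/N}) = 1 + z/n + O(n^{-2})$. Squaring and using ${\rm Re}(z)\le 0$, one obtains $|\psi_\xi(-y/n^{1/N})|^2\le 1+ C/n^2$ for $n$ large, whence $|\psi_n(s)|\le (1+C/n^2)^{\lfloor ns\rfloor/2}\le e^{Cs/2}$ uniformly in $n$. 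The dominating function $s\mapsto e^{Cs/2}$ is $\mu_m$-integrable because, for each fixed $m$, $S^\alpha_m(t)$ is a compound Poisson process with bounded jumps (the density $f_m$ is supported in $(1/m,m^2)$) and therefore possesses finite exponential moments of every order. Dominated convergence then yields $\lim_{n\to\infty}\bE[e^{-iy W^{N,\beta}_n(S^\alpha_m(t))}] = \bE[e^{z S^\alpha_m(t)}]$.

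For the outer limit I invoke formula \eqref{eq:approx_poisson_comp-gen}, legitimate precisely because ${\rm Re}(z)\le 0$, to obtain $\lim_{m\to\infty}\bE[e^{z S^\alpha_m(t)}] = \exp(-t(-z)^\alpha)$. It remains to verify the algebraic identity $-z = \frac{(-1)^{N+1}i^N\beta y^N}{N!}$, which follows from $-(-i)^N=(-1)^{N+1}i^N$ and matches the exponent in the claimed right-hand side; chaining the two limits then gives the assertion. The main obstacle is the dominated-convergence step: one must produce a bound on $|\psi_n(s)|$ that is uniform in $n$ and integrable in $s$, and this is exactly where the sign condition \eqref{cond-bN} is indispensable. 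Without ${\rm Re}(z)\le 0$ the quantity $|\psi_n(s)|$ would grow like $e^{|{\rm Re}(z)|s}$ and could not be controlled, and moreover formula \eqref{eq:approx_poisson_comp-gen} itself would fail. The finiteness of the exponential moments of the truncated approximation $S^\alpha_m(t)$ (as opposed to the genuine $\alpha$-stable subordinator $H^\alpha(t)$, whose exponential moments diverge) is precisely what permits the interchange for each fixed $m$, before the delicate passage $m\to\infty$ is carried out.
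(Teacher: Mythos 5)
Your proof is correct, and it follows the paper's overall architecture --- condition on $S^\alpha_m(t)$, take the inner limit $n\to\infty$ under the expectation for fixed $m$, then apply \eqref{eq:approx_poisson_comp-gen} for the outer limit --- but it handles the inner limit by a different device than the paper does. The paper's proof rests on Lemma \ref{lemmanuovo}, a quantitative rate estimate stating (under \eqref{cond-bN}) that
\begin{equation*}
\Bigl|\bE[e^{-iy W^{N,\beta}_n(t)}]-e^{zt}\Bigr|\le \frac{C(y)\,t}{n}\,e^{tC(y)}+\frac{1}{n}\,\frac{|\beta|\,|y|^N}{N!},
\qquad z:=\frac{(-i)^N\beta y^N}{N!},
\end{equation*}
which is then evaluated at $t= S^\alpha_m(t)$ and integrated: the error is $O(1/n)$ for fixed $m$ because $\bE[S^\alpha_m(t)e^{C(y)S^\alpha_m(t)}]<\infty$. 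You instead use only the pointwise convergence of Theorem \ref{teo3} together with dominated convergence, extracting the uniform-in-$n$ domination $|\psi_n(s)|\le e^{Cs/2}$ (where $\psi_n(s):=\bE[e^{-iyW^{N,\beta}_n(s)}]$) from the Taylor expansion $\psi_\xi(-y/n^{1/N})=1+z/n+O(n^{-2})$ and ${\rm Re}(z)\le 0$; this is softer but perfectly valid, and your justification of the integrability of the dominating function --- $S^\alpha_m(t)$ is compound Poisson with jumps bounded by $m$, hence has all exponential moments finite --- is arguably cleaner than the paper's appeal to the moment estimate \eqref{eq:st_mom_comp-2}. What the paper's quantitative lemma buys, and your route gives up, is reusability: since $C(y)$ depends continuously on $y$, the same estimate is applied uniformly over the compact support of $\mu$ in the proof of Theorem \ref{teo14}, whereas your dominated-convergence argument is tailored to a single fixed $y$ and would have to be redone uniformly in $y$ on compacts (possible, since the Taylor remainder is uniform there, but extra work). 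Finally, both you and the paper pass silently over the same routine point: the conditioning identity $\bE[e^{-iyW^{N,\beta}_n(S^\alpha_m(t))}]=\int_0^\infty\psi_n(s)\,\mu_m({\rm d}s)$, with $\mu_m$ the law of $S^\alpha_m(t)$, requires integrability of the integrand, which is unbounded since $W^{N,\beta}_n$ is complex-valued; this too follows from the finiteness of the exponential moments of $S^\alpha_m(t)$, so the omission is harmless in both proofs.
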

    Before  proving theorem \ref{teo13} we give an alternative estimate of the difference between  $\bE[e^{-iy W^{N,\beta}_n(t)}] $ and its limit for $n\to \infty$.
    \begin{lemma}\label{lemmanuovo}
    Under the assumption \eqref{cond-bN}, there exists a constant $C(y)$ depending continuously on the parameter $y$ such that the  following estimate holds
    \begin{align}\label{stimarestonuovo}
|\bE[e^{-iy W^{N,\beta}_n(t)}] -e^{(i)^N\beta t y^N/N!}|
=& \frac{C(y)t}{n}e^{tC(y)}+\frac{1}{n}  \frac{|\beta| |y|^N}{N!}
\end{align}
    \end{lemma}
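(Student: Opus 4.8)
The plan is to exploit the multiplicative structure of the characteristic function directly, rather than resumming the moment expansion of Lemma \ref{th-momenti}; this is what makes the estimate genuinely \emph{alternative}. Writing $\psi_\xi(\mu)=\bE[e^{i\mu\xi_{N,\beta}}]$ for the characteristic function of one increment and using independence, I would first record
\begin{equation*}
\bE[e^{-iy W^{N,\beta}_n(t)}]=\left(\psi_\xi\!\left(-\frac{y}{n^{1/N}}\right)\right)^{\lfloor nt\rfloor}.
\end{equation*}
By the moment formula \eqref{e3} one has $\psi_\xi(\mu)=\sum_{h\ge 0}\frac{(i\mu)^{hN}}{(hN)!}\beta^h$, so, evaluating at $\mu=-y/n^{1/N}$ and setting $z_0:=\frac{(-i)^N\beta y^N}{N!}$ (the exponent furnished by Theorem \ref{teo3} with $\lambda=-y$, which satisfies $\mathrm{Re}(z_0)\le 0$ by hypothesis \eqref{cond-bN}), I would expand
\begin{equation*}
A_n:=\psi_\xi\!\left(-\frac{y}{n^{1/N}}\right)=1+\frac{z_0}{n}+\rho_n,\qquad |\rho_n|\le \frac{1}{n^2}\sum_{h\ge 2}\frac{|\beta|^h|y|^{hN}}{(hN)!}=:\frac{g(y)}{n^2}\quad(n\ge 1),
\end{equation*}
with $g$ continuous in $y$. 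The whole problem thus reduces to estimating $|A_n^{\,p}-e^{z_0 t}|$ with $p=\lfloor nt\rfloor$.

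Next I would telescope through $B_n:=e^{z_0/n}$, writing $|A_n^{\,p}-e^{z_0 t}|\le |A_n^{\,p}-B_n^{\,p}|+|B_n^{\,p}-e^{z_0 t}|$. For the second term, since $\mathrm{Re}(z_0)\le 0$ gives $|e^{z_0 s}|\le 1$ for every $s\ge 0$, the representation $e^{z_0 p/n}-e^{z_0 t}=z_0\int_t^{p/n}e^{z_0 s}\,\mathrm{d}s$ together with $|p/n-t|=\{nt\}/n\le 1/n$ yields $|B_n^{\,p}-e^{z_0 t}|\le |z_0|/n=\frac{|\beta||y|^N}{N!\,n}$, which is exactly the second summand of \eqref{stimarestonuovo}. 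For the first term I would use the factorisation $A_n^{\,p}-B_n^{\,p}=(A_n-B_n)\sum_{j=0}^{p-1}A_n^{\,j}B_n^{\,p-1-j}$ and bound the two factors separately, the difference being $|A_n-B_n|\le |\rho_n|+|e^{z_0/n}-1-z_0/n|\le \frac{D(y)}{n^2}$ with $D(y)=g(y)+\frac{|z_0|^2}{2}e^{|z_0|}$ continuous in $y$.

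The main obstacle, and the only place where \eqref{cond-bN} is truly needed, is keeping the sum $\sum_{j=0}^{p-1}A_n^{\,j}B_n^{\,p-1-j}$ bounded uniformly as $p\sim nt\to\infty$. Here $|B_n|\le 1$ is immediate, while for $A_n$ I would invoke $\mathrm{Re}(z_0)\le 0$ a second time in the sharper form $|1+z_0/n|\le\sqrt{1+|z_0|^2/n^2}\le 1+\frac{|z_0|^2}{2n^2}$, which gives
\begin{equation*}
|A_n|\le 1+\frac{C'(y)}{n^2}\le 1+\frac{C'(y)}{n},\qquad C'(y)=\frac{|z_0|^2}{2}+g(y),
\end{equation*}
so that $|A_n|^{\,j}\le(1+C'(y)/n)^{\lfloor nt\rfloor}\le e^{C'(y)t}$ for all $j\le p-1$. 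It is precisely this cancellation of the $O(1/n)$ contribution in $|1+z_0/n|$ that the sign condition provides; without it the product $|A_n|^{\,p}$ could not be kept under a fixed exponential. Each of the $p\le nt$ summands is then at most $e^{C'(y)t}$, so $|A_n^{\,p}-B_n^{\,p}|\le \frac{D(y)}{n^2}\cdot nt\cdot e^{C'(y)t}=\frac{D(y)\,t}{n}e^{C'(y)t}$. Collecting the two terms and putting $C(y):=\max\{D(y),C'(y)\}$, continuous in $y$, yields \eqref{stimarestonuovo}; everything beyond the expansion of $A_n$ to order $n^{-2}$ and the two applications of $\mathrm{Re}(z_0)\le 0$ is routine.
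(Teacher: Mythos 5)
Your proof is correct, but contrary to your framing it is essentially the same argument as the paper's: the paper's own proof of Lemma \ref{lemmanuovo} also works with the product structure $\bE[e^{-iy W^{N,\beta}_n(t)}]=\bigl(\psi_{\xi_{N,\beta}}(\cdot)\bigr)^{\lfloor nt\rfloor}$ (not with a resummation of the moment expansion of Lemma \ref{th-momenti}), telescopes through the intermediate point $e^{z_0\lfloor nt\rfloor/n}$, factors the difference of powers, bounds the one-step discrepancy by $C(y)/n^2$ via a second-order expansion, and invokes \eqref{cond-bN} to keep the powers uniformly bounded. The only deviations are cosmetic: you expand $\psi_\xi$ by its full power series where the paper uses a Taylor remainder, you bound $|1+z_0/n|\le 1+|z_0|^2/(2n^2)$ where the paper uses $|e^{z_0/n}+r|\le 1+|r|$, and your integral representation of the floor-correction term is marginally cleaner than the paper's direct estimate.
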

    \begin{proof} By definition of $W^{N,\beta}_n(t)$=
    $\bE[e^{-iy W^{N,\beta}_n(t)}] =(\psi_{\xi_{N,\beta}}(y/n^{1/N}))^{\lfloor nt\rfloor}$, where $ \psi_{\xi_{N,\beta}}$ is the characteristic function of the complex random variable $\xi_{N,\beta}$ defined in \eqref{e2}. Hence:
   \begin{align}\nonumber
|\bE[e^{-iy W^{N,\beta}_n(t)}] -e^{(i)^N\beta t y^N/N!}|
=&
|(\psi_{\xi_{N,\beta}}(y/n^{1/N}))^{\lfloor nt\rfloor} -e^{\frac{(i)^N\beta }{N!}\frac{\lfloor nt\rfloor}{n} y^N}+e^{\frac{(i)^N\beta }{N!}\frac{\lfloor nt\rfloor}{n} y^N}-e^{(i)^N\beta t y^N/N!}|\\
\leq& |(\psi_{\xi_{N,\beta}}(y/n^{1/N}))^{\lfloor nt\rfloor} -e^{\frac{(i)^N\beta }{N!}\frac{\lfloor nt\rfloor}{n} y^N}|+|e^{\frac{(i)^N\beta }{N!}\frac{\lfloor nt\rfloor}{n} y^N}-e^{(i)^N\beta t y^N/N!}|\label{stimanuova}
\end{align} 
The first term can be estimated as
  \begin{align*}
    |(\psi_{\xi_{N,\beta}}(y/n^{1/N}))^{\lfloor nt\rfloor} -e^{\frac{(i)^N\beta }{N!}\frac{\lfloor nt\rfloor}{n} y^N}|\leq&  |(\psi_{\xi_{N,\beta}}(y/n^{1/N})) -e^{\frac{(i)^N\beta }{N!}\frac{y^N}{n} }|\sum_{j=0}^{\lfloor nt\rfloor -1}|\psi_{\xi_{N,\beta}}(y/n^{1/N})) |^j
    \end{align*} 
    By setting $r(n,\beta, y):=\psi_{\xi_{N,\beta}}(y/n^{1/N}) -e^{\frac{(i)^N\beta }{N!}\frac{y^N}{n} }$, the latter estimate takes the following form:
     \begin{align*}
    |(\psi_{\xi_{N,\beta}}(y/n^{1/N}))^{\lfloor nt\rfloor} -e^{\frac{(i)^N\beta }{N!}\frac{\lfloor nt\rfloor}{n} y^N}|\leq&  |r(n,\beta ,y)|\sum_{j=0}^{\lfloor nt\rfloor -1}|e^{\frac{(i)^N\beta }{N!}\frac{\lfloor nt\rfloor}{n} y^N}+r(n,\beta ,y)|^j\\
    \leq&  |r(n,\beta ,y)|\sum_{j=0}^{\lfloor nt\rfloor -1}(1+|r(n,\beta ,y)|)^j\\
    \leq&  |r(n,\beta ,y)|\lfloor nt\rfloor(1+|r(n,\beta ,y)|)^{\lfloor nt\rfloor}
    \end{align*} 
    Moreover 
      \begin{align*}
    |r(n,\beta ,y)|= &|\psi_{\xi_{N,\beta}}(y/n^{1/N}) -e^{\frac{(i)^N\beta }{N!}\frac{y^N}{n} }|\\
    =&|\frac{\psi^{(N)}_{\xi_{N,\beta}}(0) }{N!}\frac{y^N}{n}+\frac{\psi^{(2N)}_{\xi_{N,\beta}}(\tilde z) }{(2N)!}\frac{y^{2N}}{n^2}-\frac{i^Ny^N\beta}{N! n}-\frac{1}{2}\left(\frac{i^Ny^N\beta}{N! n}\right)^2e^{\frac{(i)^N\beta }{N!}z }|
      \end{align*} 
      where $z,\tilde z\in \bR$, with  $z\in [0, \frac{y^N}{n}]$ and $\tilde z\in [0,y/n^{1/N}]$. By formula \eqref{e3}  and the boundedness of the continuos map $\psi^{(2N)}_{\xi_{N,\beta}}$ over the interval $ [0,y]$, i.e. $|\psi^{(2N)}_{\xi_{N,\beta}}(\tilde z) |\leq M$ $\forall \tilde z \in  [0,y]$, we obtain:
   \begin{align*}
    |r(n,\beta ,y)|\leq & \frac{|\beta|^2|y|^{2N}}{n^2}\left(\frac{1}{(2N)!}|\psi^{(2N)}_{\xi_{N,\beta}}(\tilde z) |+\frac{1}{2(N!)^2}\right)\leq \frac{|\beta|^2|y|^{2N}}{n^2}\left(\frac{M}{(2N)!}+\frac{1}{2(N!)^2}\right)   
         \end{align*} 
         hence there exists a constant $C(y)$ depending continuously on the parameter $y$ such that  $|r(n,\beta ,y)|\leq \frac{C(y)}{n^2}$. We then obtain 
         \begin{align*}
    |(\psi_{\xi_{N,\beta}}(y/n^{1/N}))^{\lfloor nt\rfloor} -e^{\frac{(i)^N\beta }{N!}\frac{\lfloor nt\rfloor}{n} y^N}|\leq& \frac{C(y)}{n^2}\lfloor nt\rfloor\left(1+\frac{C(y)}{n^2}\right)^{\lfloor nt\rfloor}\leq \frac{C(y)t}{n}e^{tC(y)}
          \end{align*}

Further, the second term in \eqref{stimanuova} can be estimated as:
  \begin{align*}
|e^{\frac{(i)^N\beta }{N!}\frac{\lfloor nt\rfloor}{n} y^N}-e^{(i)^N\beta t y^N/N!}|= &|e^{\frac{(i)^N\beta }{N!}\frac{\lfloor nt\rfloor-nt}{n} y^N}-1| 
\leq \frac{1}{n}  \frac{|\beta| |y|^N}{N!}
 \end{align*} 
 and we eventually obtain 
  \begin{align*}
|\bE[e^{-iy W^{N,\beta}_n(t)}] -e^{(i)^N\beta t y^N/N!}|
=& \frac{C(y)t}{n}e^{tC(y)}+\frac{1}{n}  \frac{|\beta| |y|^N}{N!}
\end{align*} 
    \end{proof}
\begin{proof}(of theorem \ref{teo13})\\
By lemma \ref{lemmanuovo}
\begin{align*}
\bE \left[ e^{-i y W^{N,\beta}_n(S^\alpha _m(t))} \right]
=& \bE \left[ e^{(i)^N\beta S^\alpha _m(t) y^N/N!}\right]+ R(n,m,y,t)
\end{align*} 
where $| R(n,m,y,t)|\leq  \bE \left[ \frac{C(y)S^\alpha _m(t)}{n}e^{S^\alpha _m(t)C(y)}+\frac{1}{n}  \frac{|\beta| |y|^N}{N!}\right]$.\\
Since, by estimate \eqref{eq:st_mom_comp-2}, the expectation $\bE \left[ S^\alpha _m(t)e^{S^\alpha _m(t)C(y)}\right]$ is finite for any $m\in \bN$, we obtain
\begin{align*}
\lim_{n\to\infty}\bE \left[ e^{-i y W^{N,\beta}_n(S^\alpha _m(t))} \right]
=& \bE \left[ e^{(i)^N\beta S^\alpha _m(t) y^N/N!}\right]=
\end{align*} 
Eventually, by using  \eqref{eq:approx_poisson_comp-gen}, we get
\begin{align*}
\lim_{m \to \infty} \lim_{n \to \infty} \bE \left[ e^{-i y W^{N,\beta}_n(S^\alpha _m(t))} \right]=\lim_{m \to \infty}\bE \left[\exp\left( (-i)^{N} y^{N} \frac{\beta S^\alpha _m(t)}{N!}  \right)\right]
= \exp\left(-  t \left((-1)^{N+1} i^{N} y^{N} \frac{\beta}{N!} \right)^\alpha \right) \,  
\end{align*}
\end{proof}
\begin{theorem}\label{teo14}
Let $f:\bC\to \bC$ be the Fourier transform of a complex bounded measure $\mu$ on $\bR$ with compact support, i.e. $f$ be of the form $f(x)=\int e^{-iy x}d\mu(y)$. Then under the above assumptions on $\beta $ and $N$
\begin{equation}\label{lim-f}
 \lim_{m\to\infty}\lim_{n\to\infty } \bE[f(W^{N,\beta}_n(S^\alpha _m(t)))] = \int_\bR e^{- t\left((-1)^{N+1}i^{N}y^N\frac{\beta}{N!}\right)^\alpha} \, {\rm d}\mu (y)
\end{equation}
\end{theorem}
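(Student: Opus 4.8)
The plan is to reduce \eqref{lim-f} to the scalar convergence already proved in Theorem \ref{teo13} by pulling the integration against $\mu$ outside the expectation with Fubini's theorem, and then transferring the double limit through the integral by dominated convergence. Writing $f(x)=\int_\bR e^{-iyx}\,{\rm d}\mu(y)$, for each fixed pair $(n,m)$ one has formally
\begin{equation*}
\bE[f(W^{N,\beta}_n(S^\alpha_m(t)))]=\int_\bR \bE\!\left[e^{-iyW^{N,\beta}_n(S^\alpha_m(t))}\right]{\rm d}\mu(y),
\end{equation*}
with integrand $g_{n,m}(y):=\bE[e^{-iyW^{N,\beta}_n(S^\alpha_m(t))}]$, and the interchange is legitimate once $\int_\bR \bE[|e^{-iyW^{N,\beta}_n(S^\alpha_m(t))}|]\,{\rm d}|\mu|(y)<\infty$. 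Since $W^{N,\beta}_n$ is a finite sum of increments of modulus $|\beta|^{1/N}$, the crude bound $|W^{N,\beta}_n(s)|\le n^{1-1/N}|\beta|^{1/N}s$ gives $|e^{-iyW^{N,\beta}_n(S^\alpha_m(t))}|\le e^{|y|n^{1-1/N}|\beta|^{1/N}S^\alpha_m(t)}$; as $\mu$ is finite and supported in some $[-R,R]$, and $S^\alpha_m(t)$ has finite exponential moments (compound Poisson with bounded jumps), the required integrability holds and Fubini applies.

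Next I would pass to the limit $n\to\infty$ at fixed $m$. By Lemma \ref{lemmanuovo} applied inside the expectation over $S^\alpha_m(t)$ (exactly as in the proof of Theorem \ref{teo13}), one gets $g_{n,m}(y)=h_m(y)+R_{n,m}(y)$, where $h_m(y):=\bE[\exp((-i)^N y^N\beta S^\alpha_m(t)/N!)]$ and the remainder obeys $|R_{n,m}(y)|\le \frac1n\big(C(y)\,\bE[S^\alpha_m(t)e^{C(y)S^\alpha_m(t)}]+|\beta||y|^N/N!\big)$. Because $C(y)$ depends continuously on $y$, it is bounded on $[-R,R]$, and by the moment estimate \eqref{eq:st_mom_comp-2} the expectation $\bE[S^\alpha_m(t)e^{C(y)S^\alpha_m(t)}]$ is finite and bounded there; hence $\sup_{n}\sup_{|y|\le R}|g_{n,m}(y)|<\infty$ provides a $|\mu|$-integrable dominating function, and dominated convergence yields $\lim_{n\to\infty}\int_\bR g_{n,m}\,{\rm d}\mu=\int_\bR h_m\,{\rm d}\mu$.

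Finally I would let $m\to\infty$. Here condition \eqref{cond-bN} is essential: since $S^\alpha_m(t)\ge 0$, it gives $|h_m(y)|\le \bE[\exp({\rm Re}((-i)^N\beta y^N)S^\alpha_m(t)/N!)]\le 1$ uniformly in $m$ and $y$, so the constant $1$ dominates. Meanwhile \eqref{eq:approx_poisson_comp-gen}, applied with $z=(-i)^N y^N\beta/N!$ (whose real part is $\le 0$ by \eqref{cond-bN}), gives the pointwise limit $h_m(y)\to e^{-t((-1)^{N+1}i^N y^N\beta/N!)^\alpha}=:G(y)$. Dominated convergence then delivers $\lim_{m\to\infty}\int_\bR h_m\,{\rm d}\mu=\int_\bR G\,{\rm d}\mu$, which is precisely \eqref{lim-f}. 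The main obstacle is the double domination rather than the pointwise limit: because $W^{N,\beta}_n(S^\alpha_m(t))$ is complex, $|e^{-iyW^{N,\beta}_n(S^\alpha_m(t))}|$ is \emph{not} bounded by $1$ for finite $n$, so one cannot dominate $g_{n,m}$ directly and must instead quantify the $O(1/n)$ defect through Lemma \ref{lemmanuovo} together with the finiteness of the exponential moments of $S^\alpha_m(t)$; the sign condition \eqref{cond-bN} is what finally tames the surviving family $h_m$ in the outer limit.
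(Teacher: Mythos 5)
Your proposal is correct and follows essentially the same route as the paper's own proof: Fubini to exchange $\bE$ with integration against $\mu$, Lemma \ref{lemmanuovo} (with $C(y)$ bounded on the compact support) to pass to the limit in $n$ at fixed $m$, and dominated convergence together with \eqref{eq:approx_poisson_comp-gen} for the limit in $m$. The only difference is that you spell out justifications the paper leaves implicit — the integrability needed for Fubini via the exponential moments of the compound Poisson process, and the explicit dominating function $1$ coming from the sign condition \eqref{cond-bN} — which strengthens rather than changes the argument.
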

\begin{proof}
Let $K\subset  \bR$ be the support of the measure $\mu$. 
By lemma \ref{lemmanuovo}, there exists a positive constant $M\in \bR^+$ such that for any $y\in K$ the following holds:
\begin{align*} \bE \left[ e^{-i y W^{N,\beta}_n(S^\alpha _m(t))} \right]
=& \bE \left[ e^{(i)^N\beta S^\alpha _m(t) y^N/N!}\right]+ R(n,m,,t)
\end{align*} 
where $| R(n,m,t)|\leq  \bE \left[ \frac{MS^\alpha _m(t)}{n}e^{S^\alpha _m(t)M}+\frac{1}{n}  \frac{|\beta| |y|^N}{N!}\right]<+\infty$.\\
By Fubini theorem
\begin{align*}
\bE[f(W^{N,\beta}_n(S^\alpha _m(t))] =&\int_\bR \bE\left[ e^{-i y W^{N,\beta}_n(S^\alpha _m(t))} \right] \, {\rm d}\mu(y)\\
=& \int_\bR \bE \left[\exp\left( (-i)^{N} y^{N} \frac{\beta S^\alpha _m(t)}{N!}  \right)\right] \, {\rm d}\mu (y)+|\mu|R(n,m,t),
\end{align*}
where $|\mu| $ denotes the total variation of the complex measure $\mu$.
By letting $n\to\infty$ we obtain:
\begin{align*}
\lim_{n\to\infty}
\bE[f(W^{N,\beta}_n(S^\alpha _m(t))] =&\int_\bR \bE \left[\exp\left( (-i)^{N} y^{N} \frac{\beta S^\alpha _m(t)}{N!}  \right)\right]\, {\rm d}\mu(y)\\
\end{align*}
Eventually, by dominated convergence theorem, the following holds
\begin{align*}
\lim_{m\to\infty}
\lim_{n\to\infty}
\bE[f(W^{N,\beta}_n(S^\alpha _m(t))] =&\lim_{m\to\infty}\int_\bR \bE \left[\exp\left( (-i)^{N} y^{N} \frac{\beta S^\alpha _m(t)}{N!}  \right)\right] \, {\rm d}\mu(y)\\
=&\int_\bR \lim_{m\to\infty}\bE \left[\exp\left( (-i)^{N} y^{N} \frac{\beta S^\alpha _m(t)}{N!}  \right)\right]\, {\rm d}\mu(y)\\
=&\int_\bR e^{- t\left((-1)^{N+1}i^{N}y^N\frac{\beta}{N!}\right)^\alpha} \, {\rm d}\mu (y)
\end{align*}

\end{proof}

\begin{remark}\label{rem-Nalphaprocess}
Theorem \ref{teo13} allows to interpret {\em formally} the limit process of $X_{n,m}$ as an $N\alpha$-stable process. 
In fact such a process cannot exist in the case $N\alpha>2$ and, analogously to the random walk $W^{N,\beta}_n$ studied in section \ref{sez-2}, the sequence of complex random variables $X_{n,m}(t)$ does not converge in distribution. 
Theorems \ref{teo13} and \ref{teo14} have to be interpreted in a weaker sense, indeed even if the distribution of $W^{N,\beta}_n(S^\alpha _m(t))$ does not converge to a well defined probability measure on the complex plane, the integral of suitable functions (i.e. linear combinations of exponentials) converges and the limit is given by formula \eqref{lim-f}.
\\
It is particularly interesting the study of the case  $N$ being an integer strictly greater than 2  and the product $N\alpha$ satisfies the inequality $N\alpha\leq 2$.  
In this case an $N\alpha $-stable process $H^{N\alpha}(t)$ exists and its relation with the sequence of jump processes $\{W^{N,\beta}_n(S^\alpha _m(t))\}_{m,n}$ is worth of investigation. 
We can consider for instance the case where $N=2M$ with $M\in \bN$, $\alpha =1/M$ and   $\beta= (-1)^{M+1}\frac{(2M)!}{2^M}$.  
According to Theorem \ref{teo13} we have the following convergence result 
$$\lim_{m\to\infty}\lim_{n\to\infty } \bE[e^{-iy W^{N,\beta}_n(S^\alpha _m(t))}]=e^{- t\frac{y^2}{2}} , \qquad y\in \bR.$$
Nevertheless this is not sufficient to interpret the limit of $W^{N,\beta}_n(S^\alpha _m(t))$ as a {\color{black}real valued} Wiener process. 

Indeed,  for any $(n,m)\in \bN^2$ the process $W^{N,\beta}_n(S^\alpha _m(t))$ has complex paths and we can prove that
for any $t > 0$ given, the law of the random variable $W_n^{N,\beta}(S_m^\alpha(t))$
cannot converge to a  Gaussian distribution on the real axis.
Actually, a straightforward computation shows that
\begin{align*}
\lim_{n,m\to\infty} \bP\left( W_n^{N,\beta}(S_m^\alpha(t))\in B_R(0)\right)=0, 
\end{align*}
for any $R > 0$ given and $B_R(0)\subset\bC$.
Therefore, 
\begin{align*}
\lim_{n,m\to\infty} \bP\left( \left\{ \big|{\rm Re}[W_n^{N,\beta}(S_m^\alpha(t))]\big| \le \sqrt{2} R \right\} \, \cap \, \left\{ \big|{\rm Im}[W_n^{N,\beta}(S_m^\alpha(t))]\big| \le \sqrt{2} R \right\} \right)=0, 
\end{align*}
hence, even in the case where the imaginary part disappears, the real part cannot have a Gaussian distribution since it is concentrated outside the interval $(-\sqrt{2}R, \sqrt{2}R)$.

\end{remark}

\section{Probabilistic representation of evolution equations with fractional order space derivative}
\label{sez4}

Let $A_{N,\beta}:D(A_{N,\beta})\subset L^2(\bR)\to L^2(\bR)$  be the operator defined by 
$$\widehat{A_{N,\beta} f}(y):=\frac{(-i)^N\beta y^N}{N!}\hat f(y)$$
where $\hat f$ is the Fourier transform of $f\in L^2(\bR)$, i.e. $\hat f(y)=\int_\bR e^{ixy}f(x)dx$ for $f\in L^1(\bR)$, with   $$D(A_{N,\beta})=\{ f\in  L^2(\bR): \int_\bR y ^{2N}|\hat f(y)|^2 \, {\rm d}y<\infty\}.$$
In other words $A_{N,\beta }$ is the Fourier integral operator with symbol $\Psi(y):=\frac{(-i)^N\beta y^N}{N!}$. On smooth  functions $f\in L^2(\bR)\cap C^N(\bR)$ it is given by 
$$A_{N,\beta }f(x)=\frac{\beta}{N!}\frac{\partial^N}{\partial x^N} f(x).$$
{\color{black}
In the following we shall always assume that $\beta\in \bR$ is a real constant  such that, whenever $N\in \bN$ is even, then  the inequality ${\rm Re}((-1)^{N/2}\beta) \le 0$ is satisfied. 
Under this assumption the operator $A_{N,\beta}$ generates a strongly continuous contraction semi-group on $L^2(\bR)$. 
In addition, for $N\in \bN$ odd, the operator $iA_{N,\beta}$ is self-adjoint and generates  a strongly continuous unitary group  on $L^2(\bR)$.
}

Let $B :D(B)\subset L^2(\bR)\to L^2(\bR)$  be the operator defined by 
$$\widehat{B f}(y):=|y|\hat f(y)$$
with   $$D(B)=\{ f\in  L^2(\bR): \int_\bR y ^{2}|\hat f(y)|^2 \, {\rm d}y<\infty\}$$
$B$ is called {\em Riesz operator}, formally written as $B\equiv \partial _{|x|}$.  It is given by 
$$\partial _{|x|}f(x)=-k\int_0^\infty \frac{f(x-s)-2f(x)+f(x+s) }{s^2} \, {\rm d}s, \qquad x\in \bR, \qquad k=\left(2\int_0^\infty \frac{1-\cos s}{s^2} \, {\rm d}s\right)^{-1}=\frac{1}{\pi}$$
Via functional calculus, it is straightforward to define the $N$-power of $B$, as the operator $B^N$ with symbol $\Psi(y)=|y|^N$ and domain $D(B^N)=\{f\in L^2(\bR): \int _\bR y^{2N}|\hat f (y)|^2 \, {\rm d}y<\infty\}$.  For $N$  even we have that $A_{N,\beta}=\frac{(-i)^N\beta}{N!} B^N$.\\
For a given real constant $\alpha \in (0,1)$, let us define the fractional power of  $-A_{N,\beta}$ and $B^N$ as the operators $(-A_{N,\beta})^\alpha$ and $B_{N,\alpha}$  with symbols respectively 
\begin{eqnarray*}
\widehat{(-A_{N,\beta})^\alpha f} (y)&=&\left(\frac{(-1)^{N+1}i^N\beta y^N}{N!}\right)^\alpha\hat f(y),\\
\widehat{B_{N,\alpha} f} (y)&=&|y|^{N\alpha}\hat f(y),\\
\end{eqnarray*}
where, given a complex number $z\in \bC$, with $z=|z|e^{i\theta}$, with $\theta \in (-\pi, \pi]$, the $\alpha $-power of $z$ is taken as $$z^\alpha =|z|^\alpha e^{i\alpha\theta}.$$
Note that when $N$ is odd and $i^N\beta$ is a purely  imaginary number the symbol of $(-A_{N,\beta})^\alpha$ is explicitly given by $\left(\frac{(-1)^{N+1}i^N\beta y^N}{N!}\right)^\alpha:=\frac{|\beta|^\alpha}{N!^\alpha}|y |^{N\alpha}e^{ (-1)^{\frac{N-1}{2}}\frac{i\pi \alpha}{2}\frac{\beta y}{|\beta y|}}$.\\
The action of the operator $(-A_{N,\beta})^\alpha$ can be also represented by the following formula
\begin{align*}
(-A_{N,\beta})^\alpha f(x)&= \frac{\alpha}{\Gamma(1-\alpha)}\int_0^\infty \frac{f(x)-e^{s{A_N,\beta}}f(x)}{s^{\alpha +1}} \, {\rm d}s
\\
&= \frac{\alpha}{\Gamma(1-\alpha)}\int_0^\infty \lim_{n\to\infty}\frac{f(x)-\bE[f(x+W_n^{N, \beta}(s))]}{s^{\alpha +1}} \, {\rm d}s
\end{align*}
$e^{s{A_N,\beta}}$ being the semigroup generated by $A_{N,\beta}$ and $W_n^{N,\beta}$ the sequence of complex random walks defined in section \ref{sez-2}.\\
 For $N$ an even integer, there is a trivial relation between the  operators $(-A_{N,\beta})^\alpha $ and $B_{N,\alpha}$, namely $$(-A_{N,\beta})^\alpha=\left(\frac{(-1)^{N+1}i^N\beta }{N!}\right)^\alpha B_{N,\alpha}.$$
For $N$ odd,  we have 
$B_{N,\alpha}=\frac{(A_{N,\beta})^\alpha+(-A_{N,\beta})^\alpha}{2(|\beta|/N!)^\alpha\cos(\pi\alpha /2)}$. 
The action of $B_{N,\alpha}$  can be also written in the following form:
\begin{align*}
B_{N,\alpha} f(x)&= \left(2\int_0^\infty \frac{1-\cos s}{s^{\alpha +1}} \, {\rm d}s\right)^{-1}\int_0^\infty \frac{e^{sA_{N,1}}f(x)-2f(x)+e^{-sA_{N,1}}f(x)}{s^{\alpha+1}} \, {\rm d}s
\\
&= \left(2\int_0^\infty \frac{1-\cos s}{s^{\alpha +1}} \, {\rm d}s\right)^{-1}\int_0^\infty \lim_{n\to \infty}\frac{\bE[f(x+W^{N, 1}_n(s))]-2f(x)+\bE[f(x+W^{N, -1}_n(s))]}{s^{\alpha+1}} \, {\rm d}s.
\end{align*}

Let $f\in  L^2(\bR)$  be a function of the form 
\begin{equation}\label{as-init-datum}
f(x)=\frac{1}{2\pi}\int_\bR e^{-ixy}\hat f(y) \, {\rm d}y, \qquad x\in \bR,
\end{equation}
with $\hat f\in L^2(\bR)$ being a compactly supported function.  It is straightforward to verify that $f$ belongs to the domain of any of the operators above. Furthermore the subset $D\subset L^2(\bR)$ of functions of the form \eqref{as-init-datum} is an operator core. 
By considering the complex bounded Borel measure on the real line $\mu_f$ absolutely continuous with respect to the Lebesgue measure with density $ \frac{\hat f}{2\pi}$, we can look at the function $f$ defined by \eqref{as-init-datum} as the Fourier transform of $\mu_f$. 
Hence $f$ can be extended to an entire analytic function $f:\bC\to \bC$ of exponential type (see Remark \ref{lemmacrescita}).
\par
For any of the following  initial value problems
 \begin{eqnarray}
\partial_ t u(t)&=&Au(t)\nonumber\\
u(t_0)&=&f,\qquad t\geq t_0,\ f\in D\label{eqAgen}
\end{eqnarray}
with $A$ being either one of the operators $A_{N,\beta}$ and $-B^N$ or one of their fractional powers $-(-A_{N,\beta})^\alpha $ and $-B_{N,\alpha}$ and with $f$ of the form \eqref{as-init-datum}, we are going to construct a sequence of complex jump processes $\{X_{n,m}\}_{n,m\in \bN}$ providing a probabilistic representation for the solution $u(t,x)$ of the form
$$u(t,x)=\lim_{m\to \infty}\lim_{n\to \infty}\bE[f(x+X_{n,m}(t-t_0))].$$

The first result is taken from \cite{BoMa15} and is a direct consequence of Remark \ref{lemmacrescita}.
\begin{theorem}\label{teo-rapANb}
Let $f:\bR\to \bC$ be an $L^2(\bR)$ function of the form \eqref{as-init-datum}. Then the (classical \footnote{For a classical solution of the initial value problem \eqref{eqANb} we mean a function $u:[0,+\infty)\times \bR\to \bC$ which is of class $C^1$ in the time variable $t$, of class $C^N$ in the space variable $x$ and such that for any $(t,x)\in [0,+\infty)\times \bR$ the equality $\partial_ t u(t,x)=\frac{\beta}{N!}\frac{\partial^N}{\partial x^N} u(t,x)$ holds. }) solution of
 \begin{eqnarray}
\partial_ t u(t,x)&=&A_{N,\beta}u(t,x)\nonumber\\
u(t_0,x)&=&f(x),\qquad t\geq t_0, x\in \bR\label{eqANb}
\end{eqnarray}
 is given by 
 \begin{equation}
 \label{rap-ANb}
 u(t,x)=\lim_{n\to \infty}\bE[f(x+W_n^{N,\beta}(t-t_0))], 
 \end{equation}
 where $\{W_n^{N,\beta}(t)\}_{n\in \bN}$ is the sequence of complex random walks defined in \eqref{Wn}.
\end{theorem}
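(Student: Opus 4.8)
The plan is to reduce the statement to the limit already prepared in Remark~\ref{lemmacrescita} and Theorem~\ref{th-limit}, and then to check by a direct computation that the resulting explicit function is a classical solution. Since $\hat f$ is compactly supported, the function $f$ in \eqref{as-init-datum} is the Fourier transform of the complex bounded measure $\mu_f$ with density $\hat f/(2\pi)$ and compact support; hence, by Remark~\ref{lemmacrescita}(4)--(5), it extends to an entire function of exponential type for which the limit $\lim_{n}\bE[f(x+W_n^{N,\beta}(s))]$ is known to exist.

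First I would use Fubini's theorem to write, with $s:=t-t_0$,
\[
\bE\big[f(x+W_n^{N,\beta}(s))\big]=\frac{1}{2\pi}\int_\bR e^{-ixy}\,\bE\big[e^{-iy\,W_n^{N,\beta}(s)}\big]\,\hat f(y)\,{\rm d}y,
\]
and then pass to the limit under the integral. By Theorem~\ref{teo3} the inner characteristic function converges pointwise to $\exp\big(\tfrac{(-i)^N\beta s}{N!}y^N\big)$ (taking $\lambda=-y$ and using $i^N(-1)^N=(-i)^N$), so the candidate limit is
\[
u(t,x)=\frac{1}{2\pi}\int_\bR e^{-ixy}\,\exp\!\Big(\tfrac{(-i)^N\beta (t-t_0)}{N!}y^N\Big)\,\hat f(y)\,{\rm d}y,
\]
which is exactly $e^{(t-t_0)A_{N,\beta}}f$ read off the symbol $\Psi(y)=\tfrac{(-i)^N\beta y^N}{N!}$.

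The justification of this interchange is, I expect, the main technical point: since $W_n^{N,\beta}(s)$ takes complex values, $|e^{-iy W_n^{N,\beta}(s)}|$ is \emph{not} bounded by $1$, so a naive dominated-convergence argument is unavailable. The remedy is the uniform estimate of Lemma~\ref{lemmanuovo} (equivalently the uniform remainder bound $C$ appearing in the proof of Theorem~\ref{th-limit}): for $y$ in the compact support $K$ of $\hat f$ one has $\big|\bE[e^{-iyW_n^{N,\beta}(s)}]-\exp(\tfrac{(-i)^N\beta s}{N!}y^N)\big|\le C(y)/n$ with $C(\cdot)$ continuous, hence uniformly small on $K$; this supplies both the domination and the convergence, and identifies $\lim_n\bE[f(x+W_n^{N,\beta}(s))]$ with the displayed $u$.

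It then remains to verify that $u$ is the classical solution. Because $\hat f$ is compactly supported, the integrand is smooth in $(t,x)$ and all of its $t$- and $x$-derivatives are dominated by integrable functions supported in $K$, so differentiation under the integral sign is legitimate. Applying $\partial_t$ brings down the factor $\tfrac{(-i)^N\beta y^N}{N!}$, while $\partial_x^N$ brings down $(-iy)^N=(-i)^N y^N$; comparing the two shows $\tfrac{\beta}{N!}\partial_x^N u=\partial_t u$, i.e.\ $\partial_t u=A_{N,\beta}u$, and setting $t=t_0$ gives $u(t_0,\cdot)=f$. The same bounds show $u\in C^\infty$, so in particular it is $C^1$ in $t$ and $C^N$ in $x$ and is a classical solution in the required sense. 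Finally, since $A_{N,\beta}$ generates a strongly continuous contraction (and, for $N$ odd, unitary) semigroup on $L^2(\bR)$, the classical solution is unique, so it coincides with the probabilistic representation \eqref{rap-ANb}, as claimed.
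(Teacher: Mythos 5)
Your proof is correct, but it identifies the limit by a genuinely different route than the paper. The paper's proof is essentially one line: it invokes formula \eqref{eq:cor-6} of Remark \ref{lemmacrescita}, which is itself a consequence of the power-series/moment machinery (Lemma \ref{th-momenti} and Theorem \ref{th-limit}): one expands the entire extension of $f$ in its Taylor series, uses the exact moments of $W_n^{N,\beta}(t)$ plus the remainder bound, and resums to get the integral formula. You instead exchange $\bE$ and the Fourier integral (legitimate, as you note, since for fixed $n$ the expectation is a finite sum and $\hat f\in L^1$), reduce to the convergence of the characteristic-type functional of Theorem \ref{teo3}, and control the interchange of $\lim_n$ and $\int$ by the uniform $O(1/n)$ bound of Lemma \ref{lemmanuovo} on the compact support of $\hat f$. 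This is precisely the strategy the paper uses for the \emph{subordinated} case (Theorem \ref{teo14}), transplanted back to the unsubordinated equation. Two remarks on the trade-off. First, your route correctly isolates the real technical obstruction --- $|e^{-iyW_n^{N,\beta}(s)}|$ is not bounded by $1$ because the walk is complex, so naive dominated convergence fails --- which the paper's moment route sidesteps entirely. Second, your route is slightly less general: Lemma \ref{lemmanuovo} requires the sign condition \eqref{cond-bN}, which does hold under the standing assumptions of Section \ref{sez4} ($\beta$ real, and ${\rm Re}((-1)^{N/2}\beta)\le 0$ for $N$ even), but you should say so explicitly; the paper's route through Theorem \ref{th-limit} needs no condition on $\beta\in\bC$ at all. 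In compensation, your argument yields a quantitative convergence rate $O(1/n)$, uniform in $x$. Finally, your closing uniqueness remark goes beyond what the paper proves (and what the statement strictly needs): the paper only shows that the limit \eqref{rap-ANb} \emph{is} a classical solution; deducing uniqueness of classical solutions from the contraction-semigroup property would require additionally knowing that any classical solution stays in $L^2$ and is a semigroup orbit, which you have not argued --- better to drop that sentence or flag it as an unproved aside.
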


\begin{proof}
By formula \eqref{eq:cor-6}  we have 
\begin{align*}
u(t,x)=\lim_{n\to \infty}\bE[f(x+W_n^{N,\beta}(t-t_0))] = \frac{1}{2\pi}\int_\bR e^{-ixy} \exp\left((-i)^N\beta (t -t_0)\frac{y^N}{N!}\right)\hat f(y) \, {\rm d}y.
\end{align*}
By the compactness of the support of the function $\hat f$, the function $u(t,x)$ is smooth and, by direct computation, a classical solution of the PDE
\begin{align*}
\partial_ t u(t,x)=\frac{\beta}{N!}\partial ^N_x u(t,x).
\end{align*}
\end{proof}

Let us consider now the fractional power  $(-A_{N,\beta})^\alpha$ of the $N$-order differential operator $-A_{N,\beta}$ and construct a probabilistic representation of the associated $C_0$-contraction semigroup.

\begin{theorem}\label{th10}
Let $f:\bR\to \bC$ be an $L^2(\bR)$ function of the form \eqref{as-init-datum}. Then the  solution of
\begin{eqnarray}
\partial_ t u(t,x)&=&-(-A_{N,\beta})^\alpha u(t,x)\nonumber\\
u(t_0,x)&=&f(x),\qquad t\geq t_0, x\in \bR\label{eqANa}
\end{eqnarray}
 is given by 
 \begin{equation}\label{rep-aNa}u(t,x)=\lim_{m\to \infty}\lim_{n\to \infty}\bE[f(x+X_{n,m}(t-t_0))], \end{equation}
 where $\{X_{n,m}(t)\}_{n,m\in \bN}$ is the sequence of complex random walks defined  
  in \eqref{def-Xnm} as $X_{n,m}(t)=W_n^{N,\beta}(S_m^\alpha (t))$.
\end{theorem}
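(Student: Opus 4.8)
The plan is to obtain the representation \eqref{rep-aNa} as an almost immediate consequence of Theorem \ref{teo14}, and then to identify the resulting expression as the solution of \eqref{eqANa} by passing to Fourier variables and differentiating under the integral sign. First I would record the structural observation that, since $f$ has the form \eqref{as-init-datum} with $\hat f$ compactly supported, $f$ is the Fourier transform of the complex bounded measure $\mu_f$ with density $\frac{\hat f}{2\pi}$, supported on the compact set $K:=\operatorname{supp}\hat f$. For fixed $x\in\bR$ the shifted map $z\mapsto f(x+z)=\int_\bR e^{-iyz}\,e^{-ixy}\,\mathrm{d}\mu_f(y)$ is again the Fourier transform of a compactly supported complex measure, namely $e^{-ixy}\,\mathrm{d}\mu_f(y)$, whose support is still contained in $K$. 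Before invoking Theorem \ref{teo14} I would verify that the standing hypotheses of Section \ref{sez4} on $\beta$ and $N$ imply the required condition \eqref{cond-bN}: for $N$ even one has $(-i)^N=(-1)^{N/2}$ and $y^N\ge 0$, so ${\rm Re}((-i)^N\beta y^N)={\rm Re}((-1)^{N/2}\beta)\,y^N\le 0$; for $N$ odd, $(-i)^N\beta$ is purely imaginary while $y^N$ is real, so ${\rm Re}((-i)^N\beta y^N)=0$. Hence Theorem \ref{teo14} is applicable to $z\mapsto f(x+z)$.

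Applying Theorem \ref{teo14} to this shifted measure, and recalling $X_{n,m}=W^{N,\beta}_n(S^\alpha_m)$, gives
\[
\lim_{m\to\infty}\lim_{n\to\infty}\bE[f(x+X_{n,m}(t-t_0))]
=\frac{1}{2\pi}\int_\bR e^{-ixy}\,
e^{-(t-t_0)\left(\frac{(-1)^{N+1}i^N\beta y^N}{N!}\right)^\alpha}\hat f(y)\,\mathrm{d}y .
\]
The exponent appearing here is exactly the symbol of $(-A_{N,\beta})^\alpha$, so I would denote the right-hand side by $u(t,x)$ and claim it is the sought solution. Note that \eqref{cond-bN} also controls the sign of this symbol: since $(-1)^{N+1}i^N=-(-i)^N$, the quantity $w:=\frac{(-1)^{N+1}i^N\beta y^N}{N!}$ satisfies ${\rm Re}(w)\ge 0$, i.e.\ $\arg w\in[-\pi/2,\pi/2]$, whence ${\rm Re}(w^\alpha)=|w|^\alpha\cos(\alpha\arg w)\ge 0$ because $\alpha\in(0,1)$. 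This nonnegativity makes the exponential factor bounded by $1$ for $t\ge t_0$.

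It then remains to check that $u$ solves \eqref{eqANa}. At $t=t_0$ the exponential factor equals $1$, so $u(t_0,x)=f(x)$ by Fourier inversion. Because $\hat f$ has compact support and, on $K$, both $y\mapsto w(y)$ and $y\mapsto e^{-(t-t_0)w(y)^\alpha}$ are bounded (the latter by $1$), I can differentiate under the integral in $t$ and in $x$: writing $\hat u(t,y)=e^{-(t-t_0)w(y)^\alpha}\hat f(y)$ for the Fourier transform of $u(t,\cdot)$, one gets
\[
\partial_t\hat u(t,y)=-w(y)^\alpha\,\hat u(t,y)
=\widehat{-(-A_{N,\beta})^\alpha u(t,\cdot)}(y),
\]
which is precisely equation \eqref{eqANa} written in Fourier variables; transforming back yields $\partial_t u=-(-A_{N,\beta})^\alpha u$.

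The only genuinely delicate point of the argument is the interchange of the iterated limit $\lim_m\lim_n$ with the $y$-integration in the first step. This is where the compactness of $K$ is essential, and it is exactly the content already established inside the proof of Theorem \ref{teo14} (Fubini's theorem followed by dominated convergence, using the remainder bound of Lemma \ref{lemmanuovo} uniformly in $y\in K$). Consequently, in this proof I merely have to invoke Theorem \ref{teo14}; the remaining verifications — checking \eqref{cond-bN}, Fourier inversion at $t=t_0$, and legitimacy of differentiation under the integral — are routine thanks to the compact support of $\hat f$.
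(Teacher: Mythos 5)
Your proof is correct and follows essentially the same route as the paper: both rest on Theorem \ref{teo14} to identify the iterated limit with the Fourier-side formula $\frac{1}{2\pi}\int_\bR e^{-ixy}e^{-(t-t_0)\left(\frac{(-1)^{N+1}i^N\beta y^N}{N!}\right)^\alpha}\hat f(y)\,{\rm d}y$, which is then recognized as the solution (the paper by identifying it as the action of the semigroup $e^{-(-A_{N,\beta})^\alpha(t-t_0)}$, you by differentiating under the integral, which is the same verification spelled out). Your added checks — that the standing sign assumption on $\beta$ implies \eqref{cond-bN}, and the explicit initial-condition and smoothness arguments — are details the paper leaves implicit, not a different method.
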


Equation \eqref{eqANa} can be formally written as
$$
\frac{\partial}{\partial t}u(t,x)=-\left(-\frac{\beta}{N!}\right)^\alpha \frac{\partial^{N\alpha}}{\partial x^{N\alpha}}u(t,x).$$
\begin{proof}
By theorem \ref{teo14} the function $u(t,x) $ defined by the r.h.s. of \eqref{rep-aNa} is equal to 
\begin{align*}
u(t,x) = \lim_{m\to \infty}\lim_{n\to \infty}\bE[f(x+X_{n,m}(t-t_0))]
 = \frac{1}{2\pi}\int_\bR e^{-ixy}e^{- (t-t_0)\left((-1)^{N+1}i^{N}y^N\frac{\beta}{N!}\right)^\alpha}\hat f (y) \, {\rm d}y.
\end{align*}
The last line is exactly the action of the semigroup $e^{-(-A_{N,\beta})^\alpha(t-t_0)}$ on the vector $f\in L^2(\bR)$. Moreover, because of the assumptions \eqref{cond-bN} on the constants $N,\beta$ and the compactness of the support  of $\hat  f\in L^2(\bR)$, the function $u(t,x)$ is smooth in both the time and space variables.
\end{proof}
Let us now consider the Riesz operator $B$ and its powers.  Given $N\in \bN$, let us consider the initial value problem
\begin{eqnarray}
\partial_ t u(t,x)&=&-B^Nu(t,x)\nonumber\\
u(t_0,x)&=&f(x),\qquad t\geq t_0, x\in \bR\label{eq-BN}
\end{eqnarray}
with $f$ of the form \eqref{as-init-datum} as above. For $N$ even, the operator $B^N$ coincides with $(-A_{N,\beta})$, with $\beta\equiv (-1)^{\frac{N}{2}+1} N!$. By Theorem \ref{teo-rapANb} the solution of \eqref{eq-BN} is given by \eqref{rap-ANb}. In the case where $N$ is odd,  the construction of the associated process is neither simple, nor unique, as the following result shows.

\begin{theorem}
Let us consider problem \eqref{eq-BN} with $N \in \bN$ and $f\in D$.
For any $M \in \bN$, let us consider  the sequence of processes $\{W^{2MN,\beta}_n (t)\}$ defined as in \eqref{Wn}, 
with  $\beta = (-1)^{MN+1}(2MN)!$. 
Then, choosing $\alpha =\frac{1}{2M}$, according to Theorem \ref{th10} the solution of the initial value problem \eqref{eq-BN} is given by 
\begin{equation}\label{rap-BN}
u(t,x)=\lim_{m\to\infty }\lim_{n\to\infty }\bE[f(x+W^{2MN,\beta}_n(S^{\frac{1}{2M}} _m(t-t_0)))],
\end{equation}
where $S^{\alpha} _m(t)$ is the sequence of processes \eqref{Sm} approximating the $\alpha$-stable subordinator.
\end {theorem}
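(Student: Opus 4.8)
The plan is to recognize this statement not as an independent argument but as a direct specialization of Theorem \ref{th10}. The point is that the Riesz power $B^N$, whose symbol is $|y|^N$, can be realized as a fractional power $(-A_{N',\beta})^\alpha$ of an \emph{even}-order differential operator, so that the probabilistic representation \eqref{rep-aNa} already established in Theorem \ref{th10} transfers verbatim. To avoid clashing with the Riesz exponent $N$ in the present statement, I write $N'=2MN$ for the order of the underlying random walk; the remaining parameters are $\alpha=1/(2M)$ and $\beta=(-1)^{MN+1}(2MN)!$, exactly as prescribed.

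First I would carry out the symbol computation verifying $(-A_{2MN,\beta})^{1/(2M)}=B^N$ on the core $D$. By definition the symbol of $(-A_{N',\beta})^\alpha$ is $\left(\frac{(-1)^{N'+1}i^{N'}\beta y^{N'}}{N'!}\right)^\alpha$. Substituting $N'=2MN$ and $\beta=(-1)^{MN+1}(2MN)!$, the sign factors collapse: $(-1)^{2MN+1}=-1$, $i^{2MN}=(-1)^{MN}$, and multiplying by $(-1)^{MN+1}$ yields the prefactor $+1$, while $(2MN)!$ cancels the factorial in the denominator. Hence the base reduces to $y^{2MN}=|y|^{2MN}$, a nonnegative real. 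Since its argument is $\theta=0$, the branch convention $z^\alpha=|z|^\alpha e^{i\alpha\theta}$ gives $\left(|y|^{2MN}\right)^{1/(2M)}=|y|^N$, which is precisely the symbol of $B^N$.

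Next I would confirm that the hypotheses of Theorem \ref{th10} are satisfied for these parameters, namely condition \eqref{cond-bN}. With $N'=2MN$ one has $(-i)^{2MN}=(-1)^{MN}$, so
\begin{align*}
(-i)^{N'}\beta y^{N'}=(-1)^{MN}(-1)^{MN+1}(2MN)!\,y^{2MN}=-(2MN)!\,y^{2MN}\le 0
\end{align*}
for every $y\in\bR$; being real and nonpositive, its real part is $\le 0$, so \eqref{cond-bN} holds. In the same manner the even-order generation condition ${\rm Re}((-1)^{N'/2}\beta)\le 0$ holds, since $(-1)^{MN}\beta=-(2MN)!$. As $f\in D$ has the form \eqref{as-init-datum} with compactly supported $\hat f$, Theorem \ref{th10} applies directly; reading its conclusion \eqref{rep-aNa} with $X_{n,m}(t)=W^{2MN,\beta}_n(S^{1/(2M)}_m(t))$ and using the symbol identity $(-A_{2MN,\beta})^{1/(2M)}=B^N$ yields exactly formula \eqref{rap-BN}.

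The argument is thus essentially a substitution into an already-proved theorem, and I do not anticipate a genuine obstacle. The only place demanding care is the sign and branch bookkeeping in the symbol computation: one must confirm that the base of the complex $\alpha$-power is a nonnegative real, so that the principal-branch convention returns the real root $|y|^N$ rather than a spurious phase. Because the even order $2MN$ forces $y^{2MN}\ge 0$, this is automatic, and no analytic work beyond invoking Theorem \ref{th10} is required.
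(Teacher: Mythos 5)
Your proposal is correct and follows essentially the same route as the paper: the paper's (very terse) proof applies Theorem \ref{teo14} to identify the right-hand side of \eqref{rap-BN} with $\frac{1}{2\pi}\int_\bR e^{-ixy}e^{-(t-t_0)|y|^N}\hat f(y)\,{\rm d}y$, which is exactly the same sign/branch bookkeeping you perform, since Theorem \ref{th10} is itself a direct consequence of Theorem \ref{teo14}. Your explicit verification of condition \eqref{cond-bN} and of the reduction of the symbol to $|y|^N$ makes precise what the paper leaves implicit.
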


\begin{proof}
By an application of Theorem \ref{teo14}, the function $u(t,x) $ defined by the r.h.s. of \eqref{rap-BN} is equal to
$$u(t,x)=\frac{1}{2\pi}\int_\bR e^{-ixy} e^{-(t-t_0)|y|^N}\hat f(y) \, {\rm d}y.$$
\end{proof}

%
%
%
%

More generally, let us consider the operator $B_{N,\alpha}$, and the corresponding associated initial value problem
\begin{eqnarray}
\partial _t u(t,x)&=&-B_{N,\alpha}u(t,x)\nonumber\\
u(t_0,x)&=&f(x)\label{eq-BNa}
\end{eqnarray}
with $f\in L^2(\bR)$ of the form \eqref{as-init-datum}.
For $N$ even, we can take the process $W^{N,\beta}_n(t)$ associated to the operator $A_{N,\beta}$, with $\beta=(-1)^{N/2}N!$ and the solution of \eqref{eq-BNa} is given by
$$u(t,x)=\lim_{m\to\infty }\lim_{n\to\infty }\bE[f(x+W^{N,\beta}_n(S^\alpha _m(t-t_0)))].$$

The following construction allows to handle the  case  where $N$ is odd.
Indeed, recall from \cite{BoMa15} that
the distribution of $-\xi_{N,\beta}$ is equal to the distribution of $\xi_{N,-\beta}$ and the same holds for the corresponding continuous time processes $W^{N,\beta}_n(t)$.

\begin{theorem} 
Let  $W^{N,\beta}_n(t)$ and $\tilde W^{N,\beta'}_n(t)$  be   two independent copies of the process \eqref{Wn}, with $\beta=N!$ and $\beta' =-N!$ respectively and  let $S^\alpha _m(t)$ and $\tilde S^\alpha _m(t)$be two independent copies of the process \eqref{Sm}. Taking a rescaled time variable $\tilde t :=(2\cos\left(\frac{\alpha \pi}{2}\right))^{-1}t$, the solution of \eqref{eq-BNa} with $f\in D$ is given by
$$u(t,x)=\lim_{m\to \infty}\lim_{n\to \infty}\bE\left[ f\left( x+W^{N,\beta}_n \left( S^\alpha _m(\tilde t-\tilde t_0)\right)+\tilde W^{N,\beta'}_n\left( \tilde S^\alpha _m(\tilde t-\tilde t_0)\right) \right)\right]$$
\end{theorem}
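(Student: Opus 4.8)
The plan is to reduce everything to the single-process result of Theorem \ref{teo13}, exploiting the Fourier representation \eqref{as-init-datum} of $f$ together with the independence of the two families of processes. Write $f(x)=\frac{1}{2\pi}\int_\bR e^{-ixy}\hat f(y)\,dy$ with $\hat f$ supported on a compact set $K$, and set $s:=\tilde t-\tilde t_0=(2\cos(\alpha\pi/2))^{-1}(t-t_0)$, $A_{n,m}:=W^{N,\beta}_n(S^\alpha_m(s))$ and $\tilde A_{n,m}:=\tilde W^{N,\beta'}_n(\tilde S^\alpha_m(s))$. First I would show, by Fubini's theorem (justified exactly as in the proof of Theorem \ref{teo14}, using the compactness of $K$) and the independence of $A_{n,m}$ and $\tilde A_{n,m}$, that
\begin{equation*}
\bE[f(x+A_{n,m}+\tilde A_{n,m})]=\frac{1}{2\pi}\int_K e^{-ixy}\,\bE\!\left[e^{-iyA_{n,m}}\right]\bE\!\left[e^{-iy\tilde A_{n,m}}\right]\hat f(y)\,dy.
\end{equation*}
Taking the iterated limit $\lim_{m\to\infty}\lim_{n\to\infty}$ under the integral, each factor converges by Theorem \ref{teo13}.

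The heart of the argument is the algebraic identity that appears when one adds the two limiting exponents. By Theorem \ref{teo13}, with $\beta=N!$ the first factor tends to $e^{-s\,\zeta_1(y)^\alpha}$ and, with $\beta'=-N!$, the second to $e^{-s\,\zeta_2(y)^\alpha}$, where, since $N$ is odd (so $(-1)^{N+1}=1$),
\begin{equation*}
\zeta_1(y)=(-1)^{N+1}i^N y^N=i^N y^N,\qquad \zeta_2(y)=(-1)^{N+1}i^N y^N\tfrac{\beta'}{N!}=-i^N y^N.
\end{equation*}
Because $N$ is odd one has $i^N y^N=(-1)^{(N-1)/2}\,\mathrm{sgn}(y)\,i\,|y|^N$, so $\zeta_1(y)$ and $\zeta_2(y)$ are purely imaginary, both of modulus $|y|^N$, with arguments $\pm\pi/2$ opposite to one another. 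Applying the principal-branch convention $z^\alpha=|z|^\alpha e^{i\alpha\theta}$, $\theta\in(-\pi,\pi]$, gives $\zeta_1(y)^\alpha=|y|^{N\alpha}e^{\pm i\alpha\pi/2}$ and $\zeta_2(y)^\alpha=|y|^{N\alpha}e^{\mp i\alpha\pi/2}$, whence
\begin{equation*}
\zeta_1(y)^\alpha+\zeta_2(y)^\alpha=|y|^{N\alpha}\big(e^{i\alpha\pi/2}+e^{-i\alpha\pi/2}\big)=2\cos\!\Big(\frac{\alpha\pi}{2}\Big)|y|^{N\alpha},
\end{equation*}
independently of the sign of $y$. (The hypothesis \eqref{cond-bN} needed to invoke Theorem \ref{teo13} holds for both $\beta=N!$ and $\beta'=-N!$, since in each case $(-i)^N\beta y^N$ is purely imaginary.)

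Consequently the product of the two limiting characteristic functions equals $e^{-2s\cos(\alpha\pi/2)|y|^{N\alpha}}$, and the rescaling $s=(2\cos(\alpha\pi/2))^{-1}(t-t_0)$ is chosen precisely so that $2s\cos(\alpha\pi/2)=t-t_0$; note $\cos(\alpha\pi/2)>0$ for $\alpha\in(0,1)$, so the rescaling is admissible. This collapses the integral to
\begin{equation*}
u(t,x)=\frac{1}{2\pi}\int_\bR e^{-ixy}\,e^{-(t-t_0)|y|^{N\alpha}}\,\hat f(y)\,dy,
\end{equation*}
which is exactly the action of the semigroup $e^{-(t-t_0)B_{N,\alpha}}$ on $f$, since $B_{N,\alpha}$ has symbol $|y|^{N\alpha}$. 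As in Theorem \ref{th10}, the compactness of the support of $\hat f$ lets one differentiate under the integral sign, so $u$ is smooth and solves \eqref{eq-BNa}.

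I expect the main obstacle to be twofold. First is the careful bookkeeping of the complex power: one must track the arguments of $\zeta_1(y)$ and $\zeta_2(y)$ through the principal branch and verify that they are genuinely $+\pi/2$ and $-\pi/2$, which is what produces the cosine and hence the \emph{real} symbol $|y|^{N\alpha}$; this is where the oddness of $N$ and the opposite signs of $\beta$ and $\beta'$ are essential. Second is the analytic justification of exchanging the double limit with the $y$-integral: this requires the uniform-in-$y\in K$ estimate of Lemma \ref{lemmanuovo} for each fixed $m$ (to pass $n\to\infty$), together with the finiteness of the exponential moments of $S^\alpha_m$ from \eqref{eq:st_mom_comp-2} and the bounded-convergence statement \eqref{eq:approx_poisson_comp-gen} (to pass $m\to\infty$), essentially repeating the scheme of the proof of Theorem \ref{teo14} but now for a product of two independent characteristic functions.
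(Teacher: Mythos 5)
Your proposal is correct and follows essentially the same route as the paper: the paper's (very terse) proof simply applies Theorem \ref{teo14} to each of the two independent summands and uses the identity $(iy^N)^\alpha+(-iy^N)^\alpha=2\cos(\alpha\pi/2)\,|y|^{N\alpha}$ together with the time rescaling, which is exactly your factorization-plus-principal-branch computation. Your write-up merely makes explicit what the paper leaves implicit (the Fubini/independence factorization of $\bE[f(x+A_{n,m}+\tilde A_{n,m})]$ and the justification for exchanging the iterated limits with the $y$-integral), so it is a more detailed rendering of the same argument.
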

\begin{proof}
By applying Theorem \ref{teo14} we obtain
\begin{align*}
u(t,x) = \frac{1}{2\pi}\int_\bR e^{-ixy} e^{- (\tilde t-\tilde t_0)\left(( i y^N)^\alpha +(-i y^N)^\alpha\right)}\hat f(y) \, {\rm d}y
 = \frac{1}{2\pi}\int_\bR e^{-ixy} e^{-(t-t_0)|y|^{N\alpha}}\hat f(y) \, {\rm d}y.
\end{align*}
\end{proof}
\begin{remark}
In fact for any operator of the form $B^N$, with $N\in \bN$, there exists a family of associated processes. For instance, if $N=2$, the solution of the heat equation
\begin{equation}\label{heat-rem8}\partial _t u(t,x)=\frac{1}{2}\partial ^2_x u(t,x)\end{equation} can be represented by means of the Feynman-Kac formula, as the expectation with respect to the distribution of the Wiener process $W(t)$:
$$ u(t,x)=\bE [u(t_0,x+W(t-t_0))],\qquad t\geq t_0, x\in \bR $$
but alternative constructions are possible. Let us consider  for instance the  sequence of processes $W^{N,\beta}_n(S^\alpha_m(t))$, with $N=4$, $\alpha =1/2$ and $\beta =-3!$. Then the solution of \eqref{heat-rem8} can also be represented by 
$$ u(t,x)=\lim_{m\to\infty}\lim_{n\to \infty}\bE [u(t_0,x+W_n^{N,\beta}(S_m^\alpha(t-t_0)))], \qquad t\geq t_0,\ x\in \bR. $$
The same formula holds by taking a generic $M\in \bN$ and setting $N\equiv 2M$, $\alpha\equiv 1/M$ and $\beta\equiv (-1)^{M+1}\frac{(2M)!}{2^M}$
It is important to remark that, even in these cases the sequence of jump processes $\{W_n^{N,\beta}(S_m^\alpha(t))\}_{m,n}$ does not converge to the Wiener process $W$, as discussed in Remark \ref{rem-Nalphaprocess}.
\end{remark}


\section{Time fractional diffusion equations}
\label{sez6}

In this section we consider time-fractional equations of the form
\begin{equation}
\label{eq-frac-t}
\begin{aligned}
\dtime^\alpha_t u(t,x) = & A_{N,\beta}u(t,x) \\
u(0,x)= & f(x),
\end{aligned}
\end{equation}
where the time-fractional derivative $\dtime_t^\alpha$ must be understood in the sense of Caputo%
{\color{black}. Since throughout the paper $\alpha \in (0,1)$, we can define
\begin{align*}
\dtime^\alpha_t v(t) = \frac{1}{\Gamma(1-\alpha)} \int_0^t (t - s)^{-\alpha} \frac{\partial}{\partial s}v(s) \, {\rm d}s.
\end{align*}
The Caputo derivative $\dtime^\alpha_t v(t)$, for $\alpha \in (0,1)$, can also be defined as the function with
Laplace transform $\widetilde{\dtime^\alpha v}(\lambda) = \lambda^\alpha \tilde v(\lambda) - \lambda^{\alpha-1} v(0^+)$.
}
The reader can consult the book  by Samko et al. \cite{SKM} for further details.

In our construction, equation \eqref{eq-frac-t} is solved with the aid of a random time change of the complex random walk $W^{N,\beta}_{n}(t)$.
In order to explain our construction, we introduce
the $\alpha $-stable subordinator $H^\alpha(t)$, i.e., a subordinator with zero drift and L\'evy measure
\begin{align*}
m_\alpha({\rm d}x) = \frac{c}{x^{1+\alpha}} \, \mathds 1_{(0,\infty)}(x) \, {\rm d}x
\end{align*}
where $c > 0$ is a given constant. If we choose $c = \frac{1}{\Gamma(1-\alpha)}$, then the Laplace exponent becomes $\Phi(\lambda) = \lambda^\alpha$.
\\
We denote by $L^\alpha(t)$ the inverse of the subordinator $H^\alpha(t)$ (see Appendix A), namely:
\begin{align*}
L^\alpha(t) = \inf\{s \geq 0\,:\, H^\alpha(s) \geq t \}.
\end{align*}
The moments of $L^\alpha_t$ are equal to
\begin{align*}
\bE[(L^\alpha(t))^k]=\frac{k! t^{\alpha k}}{\Gamma(\alpha k+1)}, \qquad k\in \bN,
\end{align*}
while its Laplace transform  is
\begin{align*}
\bE[e^{-zL^\alpha(t)}]=\sum_{k\geq 0}\frac{(-z)^kt^{\alpha k}}{\Gamma(\alpha k +1)}, \qquad z\in \bC.
\end{align*}
In the following result, we show that the sequence of  subordinated processes $W_n^{N,\beta}(L^\alpha_t)$ can be associated with the PDE \eqref{eq-frac-t}.
Notice that, as opposite to the previous section, here we only need to take one limit.

\begin{theorem}
Let $\beta \in \bC$ and $N\in \bN$ satisfying assumption \eqref{cond-bN}.
If $f:\bR\to\bR$ is an analytic function that is the Fourier transform of a bounded complex Borel measure on $\bR$ with compact support, i.e.
\begin{align*}
f(x)=\int_\bR e^{-i\lambda x} \, {\rm d}\mu_f(\lambda), 
\end{align*}
then of the solution of the initial value problem
\begin{equation*}
\tag{\ref{eq-frac-t}}
\begin{aligned}
\dtime^\alpha_t u(t,x) = & A_{N,\beta}u(t,x) \\
u(0,x)= & f(x),
\end{aligned}
\end{equation*}
is given by
\begin{align*}
u(t,x)=\lim_{n\to \infty}\bE \left[f(x+W_n^{N,\beta}(L^\alpha (t))) \right].
\end{align*}
\end{theorem}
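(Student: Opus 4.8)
The plan is to compute the candidate $u(t,x)=\lim_{n\to\infty}\bE[f(x+W_n^{N,\beta}(L^\alpha(t)))]$ in closed form and then verify directly that it solves \eqref{eq-frac-t}. Write $\Psi(\lambda):=\frac{(-i)^N\beta\lambda^N}{N!}$ for the symbol of $A_{N,\beta}$, so that ${\rm Re}\,\Psi(\lambda)\le 0$ by \eqref{cond-bN}. First I would condition on the value of the inverse subordinator, using the independence of $L^\alpha(t)$ and the walk, to write $\bE[f(x+W_n^{N,\beta}(L^\alpha(t)))]=\int_0^\infty\bE[f(x+W_n^{N,\beta}(s))]\,\bP(L^\alpha(t)\in{\rm d}s)$. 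For fixed $s$, Theorem \ref{teo3} (applied with $\lambda\mapsto-\lambda$) gives $\lim_{n\to\infty}\bE[e^{-i\lambda W_n^{N,\beta}(s)}]=e^{s\Psi(\lambda)}$, so that, using the representation $f(x)=\int_\bR e^{-i\lambda x}\,{\rm d}\mu_f(\lambda)$, the integrand converges pointwise in $s$ to $\int_\bR e^{-i\lambda x}e^{s\Psi(\lambda)}\,{\rm d}\mu_f(\lambda)$, which is exactly $e^{sA_{N,\beta}}f(x)$.

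The main obstacle is to interchange the limit $n\to\infty$ with the integral against the law of $L^\alpha(t)$. Here I would invoke Lemma \ref{lemmanuovo}: integrating its $O(1/n)$ estimate against $|\mu_f|$ over the compact support $K$ of $\mu_f$ and setting $M:=\sup_{\lambda\in K}C(\lambda)<\infty$ yields, for every $n\ge1$, a bound of the form $|\bE[f(x+W_n^{N,\beta}(s))]|\le |\mu_f|(K)\bigl(1+Ms\,e^{Ms}+c_K\bigr)$ that is uniform in $n$, where $c_K:=\frac{|\beta|}{N!}\sup_{\lambda\in K}|\lambda|^N$. Since $L^\alpha(t)$ has the entire Mittag-Leffler Laplace transform $\bE[e^{-zL^\alpha(t)}]=\sum_{k\ge0}\frac{(-z)^kt^{\alpha k}}{\Gamma(\alpha k+1)}$, all its exponential moments are finite, so the dominating function $s\mapsto 1+Ms\,e^{Ms}+c_K$ is integrable against $\bP(L^\alpha(t)\in{\rm d}s)$ and dominated convergence applies. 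A Fubini argument (legitimate because $|e^{s\Psi(\lambda)}|\le1$ bounds the double integral by $|\mu_f|(K)$) then gives $u(t,x)=\int_\bR e^{-i\lambda x}\,\bE[e^{L^\alpha(t)\Psi(\lambda)}]\,{\rm d}\mu_f(\lambda)$.

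Next I would identify the inner expectation with a Mittag-Leffler function. Expanding the exponential and using the moment formula $\bE[(L^\alpha(t))^k]=\frac{k!\,t^{\alpha k}}{\Gamma(\alpha k+1)}$, with the interchange of sum and expectation justified by absolute convergence, gives $\bE[e^{L^\alpha(t)\Psi(\lambda)}]=\sum_{k\ge0}\frac{\Psi(\lambda)^k}{k!}\bE[(L^\alpha(t))^k]=\sum_{k\ge0}\frac{(\Psi(\lambda)t^\alpha)^k}{\Gamma(\alpha k+1)}=E_\alpha(\Psi(\lambda)t^\alpha)$, so that $u(t,x)=\int_\bR e^{-i\lambda x}E_\alpha(\Psi(\lambda)t^\alpha)\,{\rm d}\mu_f(\lambda)$.

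Finally I would verify the equation. The compact support of $\mu_f$ lets me apply $\dtime^\alpha_t$ and $A_{N,\beta}$ under the integral sign. The scalar function $g_\lambda(t):=E_\alpha(\Psi(\lambda)t^\alpha)$ satisfies $g_\lambda(0)=1$ and is the Mittag-Leffler eigenfunction of the Caputo derivative, $\dtime^\alpha_t g_\lambda(t)=\Psi(\lambda)g_\lambda(t)$; this can be checked termwise or, more cleanly, from the Laplace-transform characterisation $\widetilde{\dtime^\alpha g_\lambda}(\eta)=\eta^\alpha\tilde g_\lambda(\eta)-\eta^{\alpha-1}g_\lambda(0^+)$ recalled in the excerpt, since $\tilde g_\lambda(\eta)=\frac{\eta^{\alpha-1}}{\eta^\alpha-\Psi(\lambda)}$. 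Because $A_{N,\beta}$ acts in Fourier variables as multiplication by $\Psi(\lambda)$, applying $\dtime^\alpha_t$ under the integral produces $\int_\bR e^{-i\lambda x}\Psi(\lambda)g_\lambda(t)\,{\rm d}\mu_f(\lambda)=A_{N,\beta}u(t,x)$, while $u(0,x)=\int_\bR e^{-i\lambda x}\,{\rm d}\mu_f(\lambda)=f(x)$, so $u$ solves \eqref{eq-frac-t}. The genuinely delicate step is the uniform-in-$n$ domination supporting the first limit interchange; the remainder is scalar Mittag-Leffler calculus and differentiation under a compactly supported integral.
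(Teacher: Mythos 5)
Your proposal is correct, and its overall architecture matches the paper's: both arrive at the closed form $u(t,x)=\int_\bR e^{-i\lambda x}E_\alpha\bigl(\tfrac{\beta}{N!}(-i\lambda)^N t^\alpha\bigr)\,{\rm d}\mu_f(\lambda)$ and then verify \eqref{eq-frac-t} by Laplace-transform reasoning. The differences lie in where the work is placed, and they are to your credit. First, the paper passes from the probabilistic expression to the closed form with the words ``by explicit computation'', leaving implicit exactly the interchange of $\lim_{n\to\infty}$ with the integration over the law of $L^\alpha(t)$; you justify it by conditioning on $L^\alpha(t)$, invoking Lemma \ref{lemmanuovo} for a bound uniform in $n$, and dominating by $s\mapsto 1+Ms\,e^{Ms}+c_K$, which is integrable because the inverse subordinator (unlike $H^\alpha$ itself) has all exponential moments finite, $\bE[e^{ML^\alpha(t)}]=E_\alpha(Mt^\alpha)<\infty$. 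This mirrors what the paper does in Theorem \ref{teo14} for the approximants $S^\alpha_m$, but adapted to $L^\alpha$, where the paper itself supplies no argument; it is the genuinely delicate step, as you say, and your treatment of it is sound. Second, for the verification you use the scalar relaxation identity $\dtime^\alpha_t E_\alpha(\Psi(\lambda)t^\alpha)=\Psi(\lambda)E_\alpha(\Psi(\lambda)t^\alpha)$ at fixed $\lambda$ (checked via $\tilde g_\lambda(\eta)=\eta^{\alpha-1}/(\eta^\alpha-\Psi(\lambda))$; this is the relaxation equation \eqref{relaxEq} of Appendix \ref{app-A}, extended to complex parameter with nonpositive real part) and then differentiate under the $\mu_f$-integral, whereas the paper takes the Fourier--Laplace transform of the whole solution and of the PDE and matches them, i.e.\ compares \eqref{Laplaceut} with \eqref{Laplaceut-2}. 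The two verifications are equivalent in substance --- both ultimately rest on the same rational function of $\eta$ and on Laplace uniqueness --- but yours localizes the argument pointwise in $\lambda$ and makes explicit the differentiation-under-the-integral step (licensed by the compact support of $\mu_f$) that the paper's global transform computation keeps implicit.
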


\begin{proof} 
By explicit computation 
\begin{align}
u(t,x) &= \lim_{n\to \infty}\bE \left[f(x+W_n^{N,\beta}(L^\alpha (t))) \right] 
= \int_\bR e^{-i\lambda x}\bE[e^{\frac{\beta}{N!}(-i\lambda)^NL^\alpha(t)}] \, {\rm d}\mu_f(\lambda)
\nonumber
\\
&= \int_\bR e^{-i\lambda x}E_\alpha \left( \frac{\beta}{N!}(-i\lambda)^N t^\alpha \right) \, {\rm d}\mu_f(\lambda)
\label{int-M-L}
\end{align}
where $E_\alpha$ denotes the Mittag-Leffler function, namely $E_\alpha(t)=\sum_{k\geq 0}\frac{ t^k}{\Gamma(\alpha k+1)}$.  Under the stated assumption on the  constants $\beta\in\bC$ and $N\in \bN$, the argument $z\equiv \frac{\beta}{N!}(-i\lambda)^N t^\alpha $ is a complex number with non positive real part for any $t\in \bR^+$ and $\lambda \in\bR$, hence the map $\lambda \mapsto E_\alpha \left( \frac{\beta}{N!}(-i\lambda)^N t^\alpha \right)$ is bounded and the integral \eqref{int-M-L} is absolutely convergent and defines a $C^\infty $ function of the $x$ variable. 
Furthermore, for any $t\in \bR^+$ the function $u(t,x)$ is still the Fourier transform of a complex measure $\mu_t$ on $\bR$ with compact support which is also absolutely continuous with respect to $\mu_f$, namely :
$$\frac{d\mu_t}{d\mu_f}:=E_\alpha \left( \frac{\beta}{N!}(-i\lambda)^N t^\alpha \right)$$
In particular
$$A_{N,\beta}u(t,x)= \int_\bR e^{-i\lambda x} \frac{\beta}{N!}(-i\lambda)^N E_\alpha \left( \frac{\beta}{N!}(-i\lambda)^N t^\alpha \right) \, {\rm d}\mu_f(\lambda).$$
In order to prove that this is equal to $\partial^\alpha_t u(t,x)$, hence that equation \eqref{eq-frac-t} holds, let us take the Laplace transform of both sides. Denoting $\tilde u(\rho, x)$ the Laplace transform of $u(t,x)$ and applying Fubini theorem as well as the properties of the Mittag-Leffler function and the condition ${\rm Re}\left( \frac{\beta}{N!}(-i\lambda)^N  \right)\leq 0$,  first of all we have
\begin{align}
\tilde u(\rho,x) &= \int_0^\infty e^{-\rho t}u(t,x) \, {\rm d}t 
= \int_\bR e^{-i\lambda x}\int_0^\infty e^{-\rho t}E_\alpha \left( \frac{\beta}{N!}(-i\lambda)^N t^\alpha \right) \, {\rm d}t \, {\rm d}\mu_f(\lambda)
\nonumber
\\
&= \int_\bR\frac{ e^{-i\lambda x} }{\rho}\frac{1}{1-\left(\frac{\beta}{N!}(-i\lambda)^N \rho^{-\alpha}\right)} \, {\rm d}\mu_f(\lambda)
\label{Laplaceut}
\end{align}
On the other hand by taking the  Fourier-Laplace  transform of both sides of \eqref{eq-frac-t}  we obtain
$$\rho^\alpha \tilde u(\rho, \lambda)-\rho^{\alpha -1} \tilde u(0, \lambda)=\frac{\beta}{N!}(-i\lambda)^N \tilde u(\rho, \lambda)$$
which yields
\begin{equation}\label{Laplaceut-2}
\tilde u(\rho, \lambda)=\frac{\rho^{-1}}{1-\frac{\beta}{N!}(-i\lambda)^N\rho^{-\alpha}}\tilde u(0, \lambda).
\end{equation}
By comparing \eqref{Laplaceut} and \eqref{Laplaceut-2} we obtain the final result.


\end{proof}


\subsection{Time fractional equations and non-local space fractional equations}

In this section, we discuss the relationship between equation \eqref{eq-frac-t} and the diffusion equation with non-local forcing term
of the form
\begin{equation}
\label{eq:m1}
\begin{aligned}
\partial_t u(t,x) &= (A_{N,\beta})^{1/\alpha} u(t,x) + \sum_{k=1}^{1/\alpha-1} \frac{1}{\Gamma(\alpha k)} t^{\alpha k -1} A_{N,\beta}^k f(x)
\\
u(0,x) &= f(x)
\end{aligned}
\end{equation}
We shall require that $\alpha = M^{-1}$ for some $M \in \bN$, $M > 1$, so in particular $\alpha \in (0,1)$.
\\
The proof of the equivalence follows by taking Laplace (in time, parameter $s$) and Fourier (in space, parameter $\lambda$) transform of both
equations, and proving that the Laplace-Fourier transforms of the solutions coincide.

Let us first consider the equation \eqref{eq-frac-t}. 
By taking Laplace and Fourier transform on both sides we get
\begin{align*}
s^\alpha \hat{\tilde u}(s,\lambda) - s^{\alpha-1} \widehat{f}(\lambda) = \symbo \hat{\tilde u}(s,\lambda)
\end{align*}
hence
\begin{align}
\label{eq:m2}
\hat{\tilde u}(s,\lambda) = \frac{s^{\alpha-1}}{s^{\alpha} - \symbo} \widehat{f}(\lambda)
\end{align}
(it is worth noticing that the assumption on the sign of $\beta$: ${\rm Re}((-i)^N \beta) \le 0$, implies that the quantity we simplify on both sides is never zero).
\\
Next, we apply the same machinery to the solution of equation \eqref{eq:m1}. We have
\begin{align*}
s \hat{\tilde u}(s,\lambda) - \widehat{f}(\lambda) = \left(\symbo\right)^M \hat{\tilde u}(s,\lambda)
+ \sum_{k=1}^{M-1} s^{-\alpha k} \left(\symbo\right)^k \widehat{f}(\lambda)
\end{align*}
and rearranging both sides we get
\begin{align*}
\left(s - \left(\symbo\right)^M \right) \hat{\tilde u}(s,\lambda)
&= \sum_{k=1}^{M-1} s^{-\alpha k} \left(\symbo\right)^k \widehat{f}(\lambda)
\\
&= \frac{1 - \left(s^{-\alpha} \symbo\right)^M}{1 - s^{-\alpha} \symbo} \widehat{f}(\lambda)
\\
&= \frac{s - \left(\symbo\right)^M}{s - s^{1-\alpha} \symbo} \widehat{f}(\lambda)
\end{align*}
so we simplify the quantity in the numerator
\begin{align*}
\hat{\tilde u}(s,\lambda)
&=\frac1{s - s^{1-\alpha} \symbo} \widehat{f}(\lambda)
\end{align*}
which coincides with \eqref{eq:m2}, as required.

Informally, we notice that a fractional time derivative of order $\alpha$ has become a fractional space
derivative of order $1/\alpha$, but
this transformation affects, in a rather complicated way, the initial condition.
In the next, and last example of this section, we shall see what happens if we start with an equation
involving fractional
derivatives of the same order in both time and space, and we compare it with an equation of
integer order derivatives. To be precise, the following equation takes place instead of  \eqref{eq-frac-t}
\begin{equation}
\label{eq-frac-t2}
\begin{aligned}
\dtime^\alpha_t u(t,x) = & (A_{N,\beta})^\alpha u(t,x) \\
u(0,x)= & f(x),
\end{aligned}
\end{equation}
while the following nonlocal problem takes the place of \eqref{eq:m1}:
\begin{equation}
\label{eq:m12}
\begin{aligned}
\partial_t u(t,x) &= A_{N,\beta} u(t,x) + \sum_{k=1}^{1/\alpha-1} \frac{1}{\Gamma(\alpha k)} t^{-\alpha k } A_{N,\beta}^{\alpha k} f(x)
\\
u(0,x) &= f(x).
\end{aligned}
\end{equation}
The proof is analog to the previous one. Let us first consider equation \eqref{eq-frac-t2}; we have that
\begin{align*}
s^\alpha \hat{\tilde u}(s,\lambda) - s^{\alpha-1} \widehat{f}(\lambda) = \left(\symbo\right)^\alpha \hat{\tilde u}(s,\lambda)
\end{align*}
so that
\begin{align}
\label{eq:m22}
\hat{\tilde u}(s,\lambda) = \frac{s^{\alpha-1}}{s^{\alpha} - \left(\symbo\right)^\alpha} \widehat{f}(\lambda)
\end{align}
Consider now \eqref{eq:m12}; we have
\begin{align*}
s \hat{\tilde u}(s,\lambda) - \widehat{f}(\lambda) = \left(\symbo\right) \hat{\tilde u}(s,\lambda)
+ \sum_{k=1}^{M-1} s^{-\alpha k} \left(\symbo\right)^{\alpha k} \widehat{f}(\lambda)
\end{align*}
from which we obtain
\begin{align*}
\left(s - \left(\symbo\right) \right) \hat{\tilde u}(s,\lambda)
&= \sum_{k=1}^{M-1} s^{-\alpha k} \left(\symbo\right)^{\alpha k} \widehat{f}(\lambda)
\\
&= \frac{1 - \frac1s \left(\symbo\right)}{1 - \frac{1}{s^\alpha} \left(\symbo\right)^\alpha} \widehat{f}(\lambda)
\end{align*}
which, compared with \eqref{eq:m22}, implies that the solution of \eqref{eq:m12} coincides with that of \eqref{eq-frac-t2}, as required.

\appendix
\section{Fractional derivatives and Bochner's subordination}\label{app-A}

\subsection*{Fractional derivative}
Let $\alpha \in (0,1)$ be a real constant and let us consider the  L\'{e}vy measure $M$ on the positive half line defined by
\begin{equation}
M({\rm d}s)=\frac{\alpha}{\Gamma(1-\alpha)} \frac{{\rm d}s}{s^{\alpha +1}}
\end{equation} 
It is well known that $M$ is the L\'{e}vy measure of an $\alpha$-stable subordinator (\cite{Ber97}), that is a totally (positively) skewed stable process for which the L\'{e}vy-Khinchine formula is written in terms of the Bernstein function \begin{equation}
x^\alpha = \int_0^\infty (1- e^{-xs}) \, M({\rm d}s) .\label{Bernst-rep}
\end{equation}
Formula \eqref{Bernst-rep} is valid for any $x \in \bC$, with ${\rm Re} (x)\geq 0$, in particular for $x =|x|e^{i\theta}$, $\theta \in [-\pi/2, \pi/2]$ it gives $|x|^\alpha e^{i\alpha\theta}= \int_0^\infty (1- e^{-xs})M(ds) $. The representation \eqref{Bernst-rep} is therefore associated with the symbol of a positively skewed stable process, say $H^\alpha(t)$, $t\geq 0$. Indeed, for $\lambda>0$, we have that $\mathbb{E} \left[e^{-\lambda H^\alpha (t)} \right]= e^{-t \lambda^\alpha}$ and we say that $H^\alpha (t)$ is a stable subordinator of order $\alpha \in (0,1)$. It has non-negative increments and therefore, non-decreasing paths. Thus, $H^\alpha(t)$ can be considered as a time change and, given a stochastic process  $X(t)$, one can consider the subordinated process $X (H^\alpha (t))$. We recall that, for $\alpha \uparrow 1$, $H^\alpha (t)$ becomes the elementary subordinator $t$. The density law $h=h(t,x)$ of $H^\alpha(t)$ solves the problem
\begin{equation}
\label{problem-h}
\left\lbrace \begin{array}{l}
\displaystyle \partial_t h = - \partial_x^\alpha h\\
\displaystyle h(0,x)=\delta(x), \quad x \in \mathbb{R}^+\\
\displaystyle h(t,0) = 0, \quad t \in \mathbb{R}^+
\end{array} \right.
\end{equation}
where $\partial_x^\alpha = \partial^\alpha / \partial x^\alpha$ is the Riemann-Liouville fractional derivative with symbol $(-i y)^\alpha = | y |^\alpha e^{-i \frac{\pi \alpha}{2} \frac{y}{|y |}}$: 
$$
\frac{d^\alpha f}{d x^\alpha}(x) :=\frac{1}{2\pi}\int e^{-ixy}(-i y)^\alpha \hat f (y) \, {\rm d}y, \qquad \hat f (y)= \int e^{ixy}f(x) \, {\rm d}x.
$$ 
According with \eqref{Bernst-rep}, 
\begin{equation}
\label{frac-der-H}
\frac{d^\alpha f}{d x^\alpha}(x) = \int_0^\infty (f(x) - f(x-s)) \, M({\rm d}s). 
\end{equation}

We also introduce the inverse to a stable subordinator, that is the non-Markovian process
\begin{align*}
L^\alpha(t) = \inf\{s\geq 0\,:\, H^\alpha(s) \geq t\}, \quad t\geq 0.
\end{align*}
We have that $\mathbb{P}(L^\alpha(t) < x) = \mathbb{P}(H^\alpha(x) > t)$ and $\mathbb{E} \left[\exp -\lambda L^\alpha(t) \right] = E_\alpha(- \lambda t^\alpha)$ with $\lambda\geq 0$ where
\begin{equation}
E_{\beta}(z) := \frac{1}{2\pi i}\int_{Ha} \frac{\zeta^{\beta -1} e^\zeta}{\zeta^\beta -z} \, {\rm d}\zeta =  \sum_{k=0}^\infty \frac{z^k}{\Gamma(\beta k +1)}, \quad {\rm Re}\left(\beta \right) >0, \; z \in \mathbb{C} \label{mittag-leffler}
\end{equation}
($Ha$ is the Hankel path) is the Mittag-Leffler function. We observe that, for $\alpha \in (0,1)$,  $u(t)=E_\alpha(w t^\alpha)$ with $t>0$, $w>0$, is the fundamental solution to the fractional relaxation equation
\begin{equation}
\mathbf{D}^\alpha_t u(t) -w \, u(t)=0\label{relaxEq}
\end{equation}
where 
\begin{equation}
\mathbf{D}^\alpha_t u(t) : = \frac{1}{\Gamma(1-\alpha)} \int_{0}^{t} \frac{\partial_s u(s)\, {\rm d}s}{(t-s)^{\alpha}}
\end{equation}
is the so-called Caputo derivative or the Dzerbayshan-Caputo fractional derivative. 


\subsection*{Fractional power of a generator}
Given   a strongly continuous contraction semigroup $T(t)$ on $(C^\infty(\mathbb{R}), \|\cdot \|_\infty)$ with infinitesimal  generator $(A,D(A))$, we write $T(t)=e^{tA}$, $t>0$. Let us consider the Markov process $X=(\{X(t)\}_{t\geq 0}; \mathbb{P}_x, x \in \mathbb{R})$ with $$T(t)f(x) = \mathbb{E}_xf(X(t)), \qquad f \in D(A),$$ where $\mathbb{E}_x \mathbf{1}_\mathcal{A}(X(t))= \mathbb{P}_x(X(t) \in \mathcal{A})$. We can define the subordinated semigroup given by the Bochner integral
\begin{equation}
\label{subord-semig}
T^\alpha(t) f(x) : = \int_0^\infty T(s) f(x)\, h(t,s) \, {\rm d}s = \mathbb{E}_x f(X(H^\alpha (t))), \quad x\in \mathbb{R},\, t \in \mathbb{R}^+
\end{equation}
where $h(t,s)=\mathbb{P}(H^\alpha (t) \in ds)/ds$ has been introduced before. According to the representation given by Phillips \cite{Phill52}, for $f \in D(A)$, we also define
\begin{equation}
\label{power-A-Def}
-(-A)^\alpha f(x) := \int_0^\infty (T(s)f(x) - f(x)) \, M({\rm d}s).
\end{equation}
The formal representation $e^{tA}$ of $T(t)$ shows that $A^\alpha = -(-A)^\alpha$ given in \eqref{power-A-Def} is the generator of $T^\alpha$ given in $\eqref{subord-semig}$ by considering a functional calculus which is referred to as Bochner-Phillips calculus (see for example \cite{BergBoyDelau93}). The special case we introduced here can be extended to  general time-changed processes with infinitesimal generators $-\phi(-A)$ where $\phi: (0, \infty) \mapsto [0, \infty)$ is a Bernstein function, that is the symbol of a time change (a non decreasing process). Then, $-\phi(-A)$ is characterized via resolvents in terms of Dunford-Taylor integrals (\cite{BergBoyDelau93, Shill96}). For the generators we considered so far we evidently have that the corresponding (Fourier) symbols are written in terms of Bernstein functions. If $-\Psi$ is the symbol of the generator $A$, i.e. for $f$ belonging to the Schwartz space  $\mathcal{S}(\mathbb{R})$  of rapidly decaying $C^\infty(\mathbb{R})$ functions, 
$$
A f(x) = -\frac{1}{2\pi} \int_{-\infty}^{+\infty} e^{-i \lambda x} \Psi(\lambda) \widehat{f}(\lambda) \, {\rm d}\lambda
$$
then the fractional power of $-A$ can be represented as:
$$
(-A)^\alpha f(x) = \frac{1}{2\pi} \int_{-\infty}^{+\infty} e^{-i \lambda x} (\Psi(\lambda))^\alpha \widehat{f}(\lambda) \, {\rm d}\lambda.
$$

\section{Some estimates}\label{app-B}
\subsection{Moments of the Poisson distribution}
Let $X$ be a Poisson random variable with parameter $\lambda$.
Then the moments are given by
\begin{align*}
m_k = \bE[X^k] = \sum_{l=0}^k \lambda^l \stirl{k}{l}
\end{align*}
where $ \stirl{k}{l}$ denotes the  Stirling numbers of the second kind, defined as
\begin{align*}
\stirl{k}{l} = \frac{1}{l!} \sum_{j=0}^l (-1)^{l-j} \binom{l}{j} j^k.
\end{align*}
In particular $\stirl{n}{1} = 1$ and $\stirl{n}{n} = 1$ for any $n \ge 1$. Moreover the sum over the first $k$ values of the Stirling numbers of the second kind gives  $\sum_{l=0}^k \stirl{k}{l}=B_k$, where $B_k$ is the $k$th Bell number. Hence, If $\lambda >1$, then $m_k \leq \lambda ^kB_k$, while if $\lambda <1$, then $m_k \leq  B_k$. This eventually yields
\begin{equation}\label{eq:st_Poisson}
m_k \leq ( \lambda ^k\vee 1) B_k\leq  ( \lambda ^k\vee 1)\left(\frac{0.792 k}{\log(k+1)}\right)^k
\end{equation}
\subsection{Power law distributions}\label{powerlaw}
Given  $\alpha \in (0,1)$, let us consider a sequence of random variables  $Y_{[m]}$ with density
\begin{align*}
f_m(y) = c_m \, y^{-\alpha-1} \, {\pmb 1}_{\left(\frac{1}{m},m^2\right)}(y) 
\end{align*}
where
\begin{align*}
c_m =  \frac{\alpha}{m^{\alpha}(1 - m^{-3\alpha})}.
\end{align*}
The moments of  $Y_{[m]}$ are given by
\begin{align*}
\bE[Y_{[m]}^k] =& c_m \, \int_{1/m}^{m^2} y^{k-\alpha-1} \, {\rm d}y = \frac{\alpha}{k-\alpha} \, \frac{1}{m^{\alpha}(1 - m^{-3\alpha})}
 \left( m^{2(k-\alpha)} - m^{-(k-\alpha)} \right)
 \\
 =& \frac{\alpha}{k-\alpha} m^{2k-3\alpha} \frac{1 -  m^{-3(k-\alpha)}}{1 - m^{-3\alpha}}
\end{align*}
and for every $m > 1$ and every $k \ge 1$ we obtain the  estimate
\begin{align}\label{eq:st_mom_Y_m-app}
\bE[Y_{[m]}^k]  \le c(\alpha) \, m^{2k-3\alpha}
\end{align}
where $c(\alpha) := 1 \vee \frac{\alpha}{1-\alpha}$.

%
%
%
%



\end{document}